\documentclass[10pt]{amsart}

\usepackage{amsmath, amscd, amssymb, euler}
\usepackage[frame,cmtip,arrow,matrix,line,graph,curve]{xy}
\usepackage{graphpap, color,bbm, verbatim}
\usepackage[mathscr]{eucal}
\usepackage[normalem]{ulem}
\usepackage{pgf,tikz}
\usetikzlibrary{arrows}

\numberwithin{equation}{section}

\newcommand{\CC}{\mathbb{C}}

\newcommand{\PP}{\mathbb{P}}
\newcommand{\QQ}{\mathbb{Q}}

\newcommand{\ZZ}{\mathbb{Z}}


\newcommand{\bB}{\mathbf{B}}

\newcommand{\bM}{\mathbf{M}}


\newcommand{\cal}{\mathcal}

\def\cF{{\cal F}}

\def\cO{{\cal O}}

\newcommand{\ses}[3]{0\lr{#1}\lr{#2}\lr{#3}\lr 0}

\def\lr{\rightarrow}

\input xy
\xyoption{all}


\DeclareMathOperator{\Ext}{Ext} 
\DeclareMathOperator{\Hom}{Hom} 


\newtheorem{prop}{Proposition}[section]
\newtheorem{theo}[prop]{Theorem}
\newtheorem{lemm}[prop]{Lemma}
\newtheorem{coro}[prop]{Corollary}

\theoremstyle{definition}

\newtheorem{rema}[prop]{Remark}


\title[Virtual Poincar\'e polynomial of the space of stable pairs]{Virtual Poincar\'e polynomial of the space of stable pairs supported on quintic curves}

\author{Kiryong Chung}
\address{School of Mathematics, Korea Institute for Advanced Study, Seoul 130-722, Korea}
\email{krjung@kias.re.kr}

\keywords{Moduli spaces; Wall crossing of pairs; Virtual Poincar\'e polynomial}
\subjclass[2010]{14F42, 14E30.}
\begin{document}
\begin{abstract}
Let $\mathbf{M}^{\alpha}(d,\chi)$ be the moduli space of $\alpha$-stable pairs $(s,F)$ on the projective plane $\mathbb{P}^2$ with Hilbert polynomial $\chi(F(m))=dm+\chi$.
For sufficiently large $\alpha$ (denoted by $\infty$), it is well known that the moduli space is isomorphic to the relative Hilbert scheme of points over the universal degree $d$ plane curve. For the general $(d,\chi)$, the relative Hilbert scheme does not have a bundle structure over the Hilbert scheme of points. In this paper, as the first non trivial such a case,  we study the wall crossing of the $\alpha$-stable pairs space when $(d,\chi)=(5,2)$. As a direct corollary, by combining with Bridgeland wall crossing of the moduli space of stable sheaves, we compute the virtual Poincar\'e polynomial of $\mathbf{M}^{\infty}(5,2)$.
\end{abstract}

\maketitle
\section{Introduction}
\subsection{Introduction and results}
By definition, a pair $(s, F)$ consists of a sheaf $F$ on $\PP^2$ and one-dimensional subspace $s \subset H^0(F)$. Let us fix $\alpha \in \QQ[m]$ with a positive leading coefficient. A pair $(s,F)$ is called \emph{$\alpha$-semistable} if $F$ is pure and for any subsheaves $F'\subset  F$, the inequality
$$
\frac{\chi(F'(m))+\delta\cdot\alpha}{r(F')} \leq \frac{\chi(F(m))+\alpha}{r(F)}
$$
holds for $m\gg 0$. Here $r(F)$ is the leading coefficient of the Hilbert polynomial $\chi(F(m))$ and $\delta=1$ if the section $s$ factors through $F'$ and $\delta=0$ otherwise.
When the strict inequality holds, we say $(s,F)$ is $\alpha$-stable.

With the help of the general result of the geometric invariant theory (\cite{Mumford2}), Le Potier (\cite[Theorem 4.12]{lepot2}) proved that there exist projective schemes $\bM^{\alpha}(d,\chi)$ parameterizing $\alpha$-stable pairs $(s,F)$ such that $F$ has Hilbert polynomial $P(m)=dm+\chi$. Also, M. He (\cite{mhe}) studied the wall crossings (or flips) of the moduli spaces $\bM^{\alpha}(d,\chi)$ as $\alpha$ varies. In two extremal case,
\begin{itemize}
\item If $\mbox{deg}(\alpha)\geq 2$, then $\bM^{\alpha:=\infty}(d,\chi)$ is isomorphic to the relative Hilbert scheme of $n=\chi-\frac{d(3-d)}{2}$ points on the universal degree $d$ curve (\cite[\S 4.4]{mhe}, \cite[Proposition B.8]{PT}). Let us denote by $\bB(d,n)$ the relative Hilbert scheme. When $\alpha=\infty$, $\alpha$-stable pairs are precisely stable pairs in the sense of Pandharipande-Thomas (\cite{PT}).
\item If $\alpha$ is sufficiently small (denoted by $\alpha=+$), the moduli space has a natural forgetful morphism
$$
\xi: \bM^{+}(d,\chi) \longrightarrow \bM(d, \chi)
$$
which associates to the $0^+$-stable pair $(s,F)$ the sheaf $F$. The later moduli space $\bM(d, \chi)$ parameterizes $S$-equivalent classes\footnote{Two semistable sheaves are $S$-equivalent if they have isomorphic Jordan-H\"{o}lder filtration.} of semistable sheaves with Hilbert polynomial $dm+\chi$ (\cite{HL}).
\end{itemize}
When $\chi=1$, the moduli space $\bM(d, 1)$ is a smooth projective variety of dimension $d^2+1$. Relating with the curve counting invariants on the (open) Calabi-Yau threefolds, S. Katz (\cite{katz_gv}) conjectured the signed topological Euler number
$$
(-1)^{d^2+1}\cdot e(\bM(d,1))=n_{0,d}
$$
is exactly the genus $0$, BPS (or GV)-number $n_{0,d}$ on the local $\PP^2$ (i.e., the total space of the canonical line bundle of $\PP^2$). For the introduction of these subjects, see \cite{PT14}. For the local $\PP^2$, several authors confirmed that this conjecture holds (\cite{sahin, yuan2, cc1, cm, cc2}) for the lower degree cases through several different methods.
Specially, in \cite{cc1}, when $d\leq5$ and $\chi=1$, the authors show that the moduli spaces $\bM^{\alpha}(d,1)$ are birational among each other and thus we obtain the cohomology group of the space $\bM(d,1)$ by studying the wall crossing of the moduli spaces $\bM^{\alpha}(d,1)$. In this case, the work is well-going since the relative Hilbert scheme $\bB(d,n)$ has a projective bundle structure and all of the wall crossings are \emph{simple}, that is, the length of the JH-filtrations of $\alpha$-stable pairs is two. But, for the large $(d,\chi)$, the wall crossings among the $\alpha$-stable pairs space become very complicate because the wall crossing may not be simple. Also it is hard to understand the geometry of the relative Hilbert scheme $\bB(d,n)$ (cf. \cite{cc1}). Hence we need more careful study to get some geometric information of the space $\bM(d,\chi)$ from the $\alpha$-stable pairs spaces or its wall crossings. In this paper, we study the wall crossings when $(d,\chi)=(5,2)$, which is the first case such that the relative Hilbert scheme is not a projective bundle and the wall crossings may not be simple. That is, we will show that
\begin{theo}\label{mainthm}
\begin{enumerate}
\item There are five wall crossings between $\bM^{\infty}(5,2)$ and $\bM^{+}(5,2)$; the walls occur at $\alpha=18$, $13$, $8$, $3$ and $\frac{1}{2}$.
\item The forgetful map $\bM^+(5,2)\lr \bM(5,2)$ is a projective bundle on $\bM^{+}(5,2)_2$ with fiber $\PP^1$. In the complement of $\bM^{+}(5,2)_2$, it is a $\PP^2$-bundle map.
\end{enumerate}
Here $\bM^{+}(5,2)_2$ is the locus of $0^+$-stable pairs $(s,F)$ with $h^0(F)=2$.
\end{theo}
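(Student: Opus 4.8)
The plan is to control $\bM^{\alpha}(5,2)$ at both ends and across every chamber by the destabilizing-subpair analysis of \cite{mhe}. The numerical remark I use repeatedly is that $\gcd(5,2)=1$: a proper subsheaf of multiplicity $d'<5$ with reduced Hilbert polynomial $m+\tfrac25$ would need $\chi'/d'=\tfrac25$, impossible for $d'<5$. Hence every semistable sheaf with Hilbert polynomial $5m+2$ is stable, $\bM(5,2)$ is a fine moduli space carrying a universal sheaf $\cF$ on $\bM(5,2)\times\pt$, and this is exactly what makes the bundle statement in (2) meaningful.

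For (1) I first locate the numerical walls. On a wall a semistable pair $(s,F)$ has a two-step (or longer) filtration whose factors share the $\alpha$-slope $\tfrac{2+\alpha}{5}$; writing the section-carrying factor and its complement as types $(a,\chi_A)$ and $(5-a,2-\chi_A)$, equality of reduced $\alpha$-Hilbert polynomials yields
\[
\alpha=\frac{2a-5\chi_A}{5-a}\ \ (\text{section in the sub}),\qquad \alpha=\frac{5\chi_A}{a}-2\ \ (\text{section in the quotient}).
\]
Letting $a$ run over $1,2,3,4$ and imposing that the section-carrying factor have $h^0\ge 1$ (so its Euler characteristic is bounded below) and that both factors be pure, the only values in $(0,\infty)$ that survive are $18,13,8,3,\tfrac12$: the first four come from a multiplicity-$4$ sub carrying the section together with a line quotient $\oo_\ell(k)$, and $\tfrac12$ from a multiplicity-$2$/multiplicity-$3$ splitting. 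The harder, geometric half of (1) is to prove each value is a genuine nonempty wall and that apparent solutions such as $\alpha=\tfrac{11}{2}$ do not occur: for every surviving $\alpha_0$ I would build the stratum of strictly $\alpha_0$-semistable pairs as extensions of the allowed factors, compute its dimension, and identify the flip that interchanges the sub- and quotient-destabilized loci, discarding those numerical solutions for which no semistable $F$ with the given filtration exists.

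For (2) fix $0<\alpha<\tfrac12$, below the lowest wall. Since $F$ is stable the reduced-Hilbert-polynomial inequalities hold strictly, so every nonzero $s\in H^0(F)$ defines a $0^+$-stable pair and the fiber of $\xi$ over $[F]$ is $\PP(H^0(F))$. As $\chi(F)=2$ we have $h^0(F)=2+h^1(F)$, so everything reduces to the bound $h^1(F)\in\{0,1\}$. By Serre duality $h^1(F)=\dim\Ext^1(F,\omega)=h^0(F^{D})$ with $F^{D}=\mathcal{E}xt^1(F,\omega)$ the stable dual of type $5m-2$, and I would bound this through the geometry of the quintic support: on a smooth quintic $F$ is a degree-$7$ line bundle on a genus-$6$ curve of gonality $4$, so $h^1(F)=h^0(K-F)\le 1$, a degree-$3$ bundle with $h^0\ge 2$ being excluded by gonality $4$; the non-locally-free and reducible supports are handled by the same dual-sheaf estimate. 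Granting $h^1(F)\le 1$, the open locus $\bM^{+}(5,2)_2=\{h^0=2\}$ and its closed complement $\{h^0=3\}$ carry constant $h^0$, so cohomology and base change make $\pi_*\cF$ (with $\pi$ the projection to $\bM(5,2)$) locally free of rank $2$, respectively $3$, and $\xi$ the associated $\PP^1$-, respectively $\PP^2$-bundle.

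The main obstacle is the nonemptiness and flip analysis at the walls of (1). Listing the numerical candidates is routine, but showing that each of $18,13,8,3,\tfrac12$ is actually attained, that no strictly semistable pairs appear at any other $\alpha$, and that the longer Jordan--H\"{o}lder filtrations---the reason $(5,2)$ is the first genuinely interesting case---still yield the expected local structure of the wall-crossing, is where the real work lies. In (2) the single delicate input is the uniform bound $h^1(F)\le 1$ over all of $\bM(5,2)$, including sheaves on singular or non-reduced quintics, where the Brill--Noether picture on a smooth curve must be replaced by the dual-sheaf computation.
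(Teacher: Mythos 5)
Your proposal follows the paper's strategy at both ends, but at the two places where the real work sits you substitute a plan for a proof. For part (1): your slope computation and enumeration (with the lower bound on the Euler characteristic of a section-carrying factor) reproduces the paper's wall table, and this is indeed how the walls are located. But the paper's proof of (1) is the subsequent wall-by-wall analysis --- Lemmas \ref{wall18}, \ref{wall13}, \ref{wallc31}, \ref{wallc32} and \ref{walllast} --- in which each flip locus $C_{\alpha}^{\pm}$ is exhibited as an explicit fibration by computing $\Ext^1$ between the Jordan--H\"older factors via the long exact sequence of Lemma \ref{defcoh} and the resolutions of \cite{maican, maican5}. In particular the wall at $\alpha=3$ is not simple: one must treat the length-three filtrations $(1,(3,0))\oplus(0,(1,1))\oplus(0,(1,1))$, where nonsplit extensions need not be stable, where $\text{Aut}((0,F_{m+1}\oplus F'_{m+1}))$ jumps to $GL(2)$ when $F_{m+1}\cong F'_{m+1}$ (forcing Grassmannian rather than projective-space fibers, Lemma \ref{wallc32}), and where the locus with $F_{m+1}\neq F'_{m+1}$ is only a fibration after a $\ZZ_2$-quotient (Remark \ref{rem1}). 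You explicitly defer all of this (``I would build the stratum\dots''), and since $(5,2)$ is precisely the first case where the walls fail to be simple, what you defer is the theorem's actual content rather than a routine verification.

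For part (2): your reduction --- universal family from $\gcd(5,2)=1$, fibers $\PP(H^0(F))$, stratification by $h^0$, cohomology and base change --- is the paper's argument. The one substantive input is the uniform bound $h^0(F)\leq 3$ for every stable $F\in\bM(5,2)$, which the paper imports from \cite[Theorem 1.1]{cc1.5}. Your gonality argument (on a smooth quintic, $F$ is a degree-$7$ line bundle on a genus-$6$ curve of gonality $4$, so $h^1(F)=h^0(K_C\otimes F^{-1})\leq 1$ because a $g^1_3$ cannot exist) is correct and is a nice self-contained substitute on the open locus of smooth supports. But the sentence ``the non-locally-free and reducible supports are handled by the same dual-sheaf estimate'' is not an argument: for stable sheaves supported on reducible or non-reduced quintics there is no line-bundle/gonality picture, and establishing the bound there is exactly the non-trivial content of the cited theorem. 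So both halves of your proposal contain genuine gaps, each located precisely at the step your own summary flags as ``where the real work lies.''
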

On the other hand, the moduli space $\bM(5,2)$ has another wall crossings, that is, the Bridgeland wall crossing. This was done in a general setting by many authors (for example, \cite{woolf,bertram}). In order to get the cohomology group of the space $\bM(5,2)$ from the Bridgeland wall crossing as it was done in \cite{cc2}, it is essential to know the final birational (or wall crossing model) of the moduli space $\bM(5,2)$. By studying the nef cone of one of the birational model of $\bM(5,2)$, we obtain
\begin{prop}
The final birational model of $\bM(5,2)$ is isomorphic to the Grassmannian variety $Gr(2,15)$.
\end{prop}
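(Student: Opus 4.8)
The plan is to realize $\bM(5,2)$ as a Bridgeland moduli space $M_{\sigma_0}$ for $\sigma_0$ in the Gieseker chamber and then to run the Bridgeland wall-crossing towards the opposite end of the stability manifold, where the objects should acquire a single uniform homological presentation. Since $\gcd(5,2)=1$, stability equals semistability, so $\bM(5,2)$ and all of its Bridgeland birational models are smooth of dimension $5^2+1=26$, and they are Mori dream spaces with $\mathrm{Pic}\otimes\RR$ of rank two (spanned by the determinant classes of the Le Potier--Donaldson construction). Thus the entire birational geometry is encoded in a chamber decomposition of a two-dimensional cone. Concretely, I would fix the birational model $M'$ sitting in the chamber adjacent to the last wall and compute its nef cone: one boundary ray is the pullback of the ample generator from the support morphism $\bM(5,2)\to|\opt(5)|=\PP^{20}$, and the problem is to identify the geometric contraction attached to the other boundary ray. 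Note that $h^0(F)$ already jumps between $2$ and $3$ on $\bM(5,2)$ itself (this is exactly the dichotomy recorded in Theorem \ref{mainthm} through the locus $\bM^{+}(5,2)_2$), so no Grassmann structure can be read off $\bM(5,2)$ directly; a uniform two-dimensional datum can only emerge after crossing the wall, on $M'$.

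Next I would produce the classifying morphism $\Phi\colon M'\to Gr(2,15)$. The natural $15$-dimensional vector space is $V=H^0(\PP^2,\opt(4))=\Hom(\opt(-2),\opt(2))$, and the content of the construction is that every $\sigma$-semistable object $E$ of class $(0,5,-\tfrac{11}{2})$ on the far side of the last wall carries a canonical two-dimensional subspace of $V$. I would extract it from a Beilinson-type resolution of $E$: a semicontinuity-plus-stability argument should show that the relevant $\Hom$ (respectively $\Ext^1$) groups have constant dimension $2$ and $15$ across the family on $M'$, so that the associated evaluation data glue into an inclusion of a rank-$2$ subbundle of $V\otimes\cO_{M'}$, i.e.\ into a morphism $M'\to Gr(2,15)$. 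I would then check that the $\Phi$-pullback of the Plücker polarization is precisely the semiample divisor spanning the second boundary ray of the nef cone, which is what forces the target to be $Gr(2,15)$ rather than a finite quotient of it.

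Finally I would prove that $\Phi$ is an isomorphism. Injectivity amounts to reconstructing $E$ from its two-dimensional subspace via the resolution, the point being that the subspace determines the presenting map up to the very $GL_2$ that is quotiented out in the Grassmannian; dominance follows from a dimension count together with the generic description of sheaves in $\bM(5,2)$. Since both $M'$ and $Gr(2,15)$ are smooth, in particular normal, of the same dimension $26=2\cdot 13$, a bijective morphism of this kind is an isomorphism by Zariski's main theorem, and the last flip then identifies the final birational model of $\bM(5,2)$ with $Gr(2,15)$.

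I expect the main obstacle to be the middle step: controlling the objects that become strictly semistable \emph{at} the last wall. These are exactly the loci where the constancy of the $\Hom$ and $\Ext^1$ dimensions can fail and where $\Phi$ might a priori drop rank or cease to be injective, so the crux is to match the wall-crossing stratification of $M'$ with the Schubert stratification of $Gr(2,15)$ and to verify that the destabilizing subobjects at the last wall account exactly for the degenerate Plücker data. Equivalently, in the nef-cone language, the hard point is to show that the semiample class on the second boundary ray is base-point free with image of the full dimension $26$, rather than defining only a rational or a finite map; this is what rules out a mere birational or multiple-cover relationship between the final model and $Gr(2,15)$.
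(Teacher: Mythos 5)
Your strategy --- build a classifying morphism from the last Bridgeland model to a Grassmannian and prove it is bijective --- is genuinely different from the paper's argument, but as written it has two concrete gaps. First, the linear algebra is misidentified: the natural $15$-dimensional space is not $H^0(\opt(4))=\Hom(\opt(-2),\opt(2))$ but $\Hom(T_{\PP^2}(-4),\opt)\cong H^0(\Omega_{\PP^2}(4))$. By Maican's classification (and this is exactly how the paper describes the divisor $D$), a general $F\in\bM(5,2)$ has a presentation $\ses{T_{\PP^2}(-4)}{2\opt}{F}$, and the point of $Gr(2,15)$ attached to $F$ is the span of the two components of the presenting map inside $\Hom(T_{\PP^2}(-4),\opt)$. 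The two candidate spaces are both $15$-dimensional but are non-isomorphic $SL_3$-representations ($\mathrm{Sym}^4$ versus the Schur functor $\Sigma^{3,1}$), and there is no natural two-dimensional subspace of $H^0(\opt(4))$ attached to $F$; the complex $[\opt(-2)\lr 2\opt]$ is the destabilizing object at the last wall, not a presentation of $F$. Second, and more seriously, the endgame is structurally wrong: the last wall is a divisorial contraction, not a flip. The model in the chamber adjacent to it is (up to codimension one) the blow-up $\widetilde{G}$ of $Gr(2,15)$ along $\PP^2\times Gr(2,6)$, which has Picard rank $2$, while $Gr(2,15)$ has Picard rank $1$; the classifying morphism out of $\widetilde{G}$ contracts the exceptional divisor over $\PP^2\times Gr(2,6)$, so it is not injective, the claimed constancy of the $\Hom$ and $\Ext^1$ dimensions fails precisely along that divisor, and the Zariski-main-theorem step collapses. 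If instead you take $M'$ to be the model on the far side of that wall, then your rank-two nef-cone setup no longer applies, and the uniform presentation fails over the center of the blow-up. Either way the bijectivity-plus-smoothness argument does not close; the final model arises as the image of a semiample divisor, not as a space bijective to its neighbor.

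The paper never constructs a morphism. It imports from Dr\'ezet--Trautmann (\S 9.2) and Maican (\S 2.3) that $\widetilde{G}$ and $\bM(5,2)$ agree in codimension one, hence have the same effective cone; it computes via the $\lambda$-map and the destabilizing object $[\cO(-2)\lr 2\cO]$ that the divisor class of the last wall is $A+D$; and it then combines $-15A=K_{\bM(5,2)}=\pi^{*}K_{Gr(2,15)}+15E$ with $D=E=\overline{X_{01}}$ to get that $\pi^{*}(-K_{Gr(2,15)})=15(A+D)$ is nef but not ample on $\widetilde{G}$, so the birational model associated to classes in $[A+D,D)$ is exactly $Gr(2,15)$. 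Note that the content you defer to your ``main obstacle'' --- a uniform presentation of all parametrized objects and the matching of the degenerate locus with $\PP^2\times Gr(2,6)$ --- is precisely what the cited Dr\'ezet--Trautmann/Maican results supply; to make your route self-contained you would essentially have to reprove them. Also, your heuristic that no Grassmann datum is visible on $\bM(5,2)$ because $h^0(F)$ jumps is off: the presentation above exists exactly off the divisor $D$ (a codimension-one condition; the locus $h^0(F)=3$ has codimension three and sits inside $D$), so the rational map to $Gr(2,15)$ is already defined on $\bM(5,2)\setminus D$, and the role of the wall crossings is to turn it into a regular contraction.
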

The wall crossings of two different types are summarized into the following diagram. Let $\bM^{\alpha}(5,2):=\bM_5^\alpha$ and $\bM(5,2):=\bM_5$.
$$\xymatrix{\bM_5^{\infty}\ar[rd]& & \cdots\ar[ld]\ar[rd]&&\bM_5^{+}\ar[ld]\ar[rd]&&\\
&\bM_5^{18}&&\bM_5^{\frac{1}{2}}&&\bM_5\ar@{<-->}[rr]&&\widetilde{G}\ar[d]\\
&&&&&&&Gr(2,15)
}
$$
As a direct corollary,
\begin{coro}\label{maincor}
The virtual Poincar\'e polynomial of the space $\bM^{\infty}(5,2)$ is given by
\begin{multline*}
1+3p+9p^2+22p^3+50p^4+99p^5+173p^6+256p^7+330p^8+379p^9+407p^{10}\\
+420p^{11}+426p^{12}+428p^{13}+429p^{14}+428p^{15}+423p^{16}+410p^{17}+382 p^{18}\\
+333 p^{19}+259 p^{20}+176 p^{21}+101 p^{22}+51 p^{23}+22 p^{24}+9 p^{25}+3 p^{26}+p^{27}.
\end{multline*}
\end{coro}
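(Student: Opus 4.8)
The plan is to compute $P(\bM^{\infty}(5,2))$ by chaining together the two families of wall crossings displayed in the diagram, exploiting the fact that the virtual Poincar\'e polynomial $P(\,\cdot\,)$ is additive over locally closed stratifications and multiplicative over Zariski-locally-trivial fibrations; in particular $P(\PP^n)=1+p+\cdots+p^n$, so that $p$ plays the role of the Lefschetz class. I would anchor the whole computation at the Grassmannian end, where
$$
P(Gr(2,15))=\binom{15}{2}_{p}=\frac{(1-p^{14})(1-p^{15})}{(1-p)(1-p^{2})},
$$
a variety of dimension $26$ matching $\dim\bM(5,2)=d^{2}+1=26$.

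By the Proposition, $Gr(2,15)$ is the final birational model of $\bM(5,2)$, reached through the Bridgeland chain $\bM_5\dashrightarrow\widetilde G\to Gr(2,15)$. Each map in this chain is a flip whose center is an explicitly describable locus of (semi)stable sheaves, so crossing it alters the class in the Grothendieck ring by the difference of the virtual Poincar\'e polynomials of the two flipping loci. Running this chain backwards from $Gr(2,15)$ produces $P(\bM(5,2))$. Next I would pass to $\bM^{+}(5,2)$ via the forgetful morphism $\xi$: by Theorem~\ref{mainthm}(2), $\xi$ is a $\PP^1$-bundle over the stratum $\{h^0(F)=2\}=\xi(\bM^{+}(5,2)_2)$ and a $\PP^2$-bundle over its complement $\{h^0(F)=3\}$, whence
$$
P(\bM^{+}(5,2))=(1+p)\,P\big(\bM(5,2)_{h^0=2}\big)+(1+p+p^{2})\,P\big(\bM(5,2)_{h^0=3}\big).
$$

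Finally I would run the five pair-walls at $\alpha=\tfrac12,3,8,13,18$ of Theorem~\ref{mainthm}(1) in order, climbing from $\bM^{+}(5,2)$ up to $\bM^{\infty}(5,2)$. At each wall the two adjacent spaces $\bM^{\alpha^-}$ and $\bM^{\alpha^+}$ agree away from flipping loci $\Sigma^{\pm}$, where $\Sigma^{-}$ (pairs stable on the low side) is removed and $\Sigma^{+}$ (pairs stable on the high side) is glued in. For a simple wall each $\Sigma^{\pm}$ is a projective bundle $\PP(\Ext^1)$ over a common base $Z$ parameterizing the destabilizing sub/quotient data, so that
$$
P(\bM^{\alpha^{+}})-P(\bM^{\alpha^{-}})=P(\Sigma^{+})-P(\Sigma^{-})=\big([\PP^{b}]-[\PP^{a}]\big)\,P(Z),
$$
with $a,b$ read off from the relevant $\Ext$-dimensions; accumulating these five corrections yields $P(\bM^{\infty}(5,2))$, and collecting terms gives the stated polynomial.

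The hard part will be Step three, together with the Bridgeland side of the anchoring step: I must identify at each wall precisely which pairs destabilize, describe the flipping center $Z$ and the $\Ext$-dimensions $a,b$ governing its projective-bundle structure, and in particular handle the walls where the crossing is \emph{not} simple (JH-length exceeding two). At such a wall $\Sigma^{\pm}$ is no longer a single projective bundle over a smooth base but must itself be stratified according to the degeneration type of the Jordan--H\"older factors before its virtual Poincar\'e polynomial can be assembled. Carrying out this $\Ext$-bookkeeping consistently across all five pair-walls, computing $P(Z)$ for each center (these bases are typically smaller moduli spaces or relative Hilbert schemes of points), and verifying that the two independent chains agree at $\bM(5,2)$, is where the genuine work lies.
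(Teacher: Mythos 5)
Your proposal follows essentially the same route as the paper's proof: anchor the computation at $Gr(2,15)$ via the Bridgeland chain to recover $P(\bM(5,2))$, use the $\PP^1$/$\PP^2$-bundle structure of the forgetful map (together with the separate computation of $P(\bM(5,2)_3)$) to obtain $P(\bM^{+}(5,2))$, and then accumulate the five pair-wall corrections $P(C_\alpha^+)-P(C_\alpha^-)$ up to $\bM^{\infty}(5,2)$. The only caveat is that constancy of the $\Ext^1$-dimension over the flipping base fails already at the \emph{simple} wall $\alpha=18$ (the paper's $C_{18}^-$ is a $\PP^3$-bundle off a sublocus $D=\PP^2\times\PP^9$ and a $\PP^4$-bundle on $D$), so the stratification you reserve for the non-simple wall $\alpha=3$ is in fact also needed there; your general $\Ext$-bookkeeping framework accommodates this once noticed.
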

\begin{rema}
In particular, the virtual Euler number of $\bM^{\infty}(5,2)$ is $e(\bM^{\infty}(5,2))=6030$. But the virtual Euler number of the PT-space of local $\PP^2$ (that is, the total space of the canonical line bundle $K_{\PP^2}$) is $6060$; this is obtained using the torus localization technique (\cite{choi}). The difference $30$ comes from the Euler number of the sheaves supported on $\PP^1$ which is the complement of the zero section of local $\PP^2$. This was reported to the author by J. Choi. The author would like to thank J. Choi for the comment.
\end{rema}
\subsection{Stream of the paper}
In \S2, we study the wall crossing of the moduli spaces of $\alpha$-stable pairs on $\PP^2$ by using the classification of semistable sheaves in \cite{maican, maican5}. Also, we analyze the forgetful map $\xi$ by considering the Brill-Noether locus in $\bM(5,2)$. In \S3, we  find the last birational model of $\bM(5,2)$ by studying the effective cone of the moduli space $\bM(5,2)$. As a corollary, we obtain the Poincar\'e polynomial of the space $\bM(5,2)$ which reprove the result of \cite{yuan2}. In \S4, we compute the Poincar\'e polynomial of the relative Hilbert scheme $\bB(5,7)$ by using the result of the previous sections.

\medskip
\textbf{Acknowledgement.}
We thank the anonymous referee for valuable comments and suggestions to improve the quality of the paper. The author is partially supported by Korea NRF grant 2013R1A1A2006037.

\section{Wall crossings of the spaces $\bM^{\alpha}(5,2)$}
In this section, we firstly study the wall crossing among $\bM^{\infty}(5,2)$ and $\bM^+(5,2)$. Secondly, we analyze the forgetful map $\bM^{+}(5,2)\longrightarrow \bM(5,2)$ defined in the introduction by analyzing the Brill-Noether locus. For convenience of the reader, we state the following useful results which will be used several times in this paper.
\begin{lemm}\cite[Corollary 1.6]{mhe}\label{defcoh}
Let $\Lambda=(s, F)$ and $\Lambda'=(s', F')$ be pairs on a smooth projective variety $X$. There exists a long exact sequence
\begin{align*}
0&\lr \Hom(\Lambda,\Lambda')\lr \Hom (F,F')\lr \Hom(s,H^0(F')/s')\\
&\lr \Ext^1(\Lambda,\Lambda')\lr \Ext^1(F,F')\lr \Hom(s,H^1(F'))\\
&\lr \Ext^2(\Lambda,\Lambda')\lr \Ext^2(F,F')\lr \Hom(s,H^2(F'))\lr \cdots.
\end{align*}
\end{lemm}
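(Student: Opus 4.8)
The plan is to recover the cited identity by reconstructing the standard mapping-cone description of the hyper-Ext groups of pairs. The starting point is to view the pair $\Lambda=(s,F)$ as the morphism $s\colon V\otimes\oo_X\lr F$, where $V=\langle s\rangle$ is the one-dimensional subspace spanned by the section (so that $\Hom(s,-)$ in the statement means $\Hom(V,-)$), and similarly $s'\colon V'\hookrightarrow H^0(F')$ for $\Lambda'$. First I would form, in $D^b(X)$, the ``commutativity defect'' morphism
$$
c\colon \Hom(V,V')\oplus R\Hom(F,F')\lr R\Hom(V\otimes\oo_X,F'),\qquad (\psi,\phi)\longmapsto \phi\circ s-s'\circ\psi,
$$
and define $R\Hom(\Lambda,\Lambda'):=\mathrm{Cone}(c)[-1]$, with $\Ext^i(\Lambda,\Lambda'):=\mathbb{H}^i\big(R\Hom(\Lambda,\Lambda')\big)$. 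Here $\Hom(V,V')$ is a complex concentrated in degree $0$, and $R\Hom(V\otimes\oo_X,F')=V^*\otimes R\Gamma(F')$ has cohomology $\Hom(V,H^j(F'))$ in degree $j$ because $X$ is smooth projective. The only nonroutine point at this stage is to verify that $\mathbb{H}^0\big(R\Hom(\Lambda,\Lambda')\big)=\ker(c_0)$ reproduces the morphisms of pairs: a class $(\psi,\phi)$ in the kernel is exactly a commuting square, and since $V$ is one-dimensional and $s'$ is injective, $\psi$ is uniquely determined by $\phi$, so $\ker(c_0)\cong\{\phi\in\Hom(F,F'):\phi(s)\in\langle s'\rangle\}$, which is $\Hom(\Lambda,\Lambda')$.

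Next I would write down the long exact sequence of the distinguished triangle $A\xrightarrow{c}B\lr \mathrm{Cone}(c)\xrightarrow{+1}$ with $A=\Hom(V,V')\oplus R\Hom(F,F')$ and $B=R\Hom(V\otimes\oo_X,F')$, and substitute the cohomology computed above. Using $\mathbb{H}^i(A)=\Ext^i(F,F')$ for $i\geq 1$, $\mathbb{H}^0(A)=\Hom(V,V')\oplus\Hom(F,F')$, and $\mathbb{H}^i(B)=\Hom(s,H^i(F'))$, the triangle gives a sequence of exactly the asserted shape, except that the initial portion reads
$$
0\lr\Hom(\Lambda,\Lambda')\lr\Hom(V,V')\oplus\Hom(F,F')\xrightarrow{c_0}\Hom(s,H^0(F'))\lr\Ext^1(\Lambda,\Lambda')\lr\cdots
$$
rather than beginning with $\Hom(F,F')\to\Hom(s,H^0(F')/s')$.

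The final and only delicate step is the cancellation that converts $H^0(F')$ into the quotient $H^0(F')/s'$. The summand $\Hom(V,V')$ maps, via $\psi\mapsto -s'\circ\psi$, isomorphically onto the subspace $\Hom(V,\langle s'\rangle)\subset\Hom(V,H^0(F'))$; since $\Hom(V,-)$ is exact on vector spaces, deleting this summand from the source of $c_0$ and dividing its target by $\Hom(V,\langle s'\rangle)$ leaves both $\ker(c_0)$ and $\mathrm{coker}(c_0)$ unchanged, while replacing $\Hom(s,H^0(F'))$ by $\Hom(s,H^0(F')/s')$ and leaving every higher term untouched. This produces precisely the sequence in the lemma. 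Accordingly, the main obstacle is not geometric but organizational: one must fix conventions carefully so that this mapping-cone definition of $\Ext^i(\Lambda,\Lambda')$ coincides with the deformation-theoretic Ext used elsewhere in the paper, and must carry out the degree-$0$ cancellation cleanly; the smoothness of $X$ enters only to guarantee that $R\Hom_X(F,F')$ and $R\Gamma(F')$ compute the global Ext groups and cohomology appearing in the statement.
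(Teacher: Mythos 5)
Your proof is correct, and it takes essentially the same route as the source the paper relies on: the paper gives no proof of its own, quoting the statement directly from \cite[Corollary 1.6]{mhe}, where the groups $\Ext^i(\Lambda,\Lambda')$ of pairs are defined by exactly this mapping-cone (hypercohomology) construction and the long exact sequence is obtained from the associated distinguished triangle together with the degree-zero cancellation replacing $\Hom(s,H^0(F'))$ by $\Hom(s,H^0(F')/s')$. Your reconstruction, including the identification $\mathbb{H}^0(\mathrm{Cone}(c)[-1])\cong\Hom(\Lambda,\Lambda')$ and the observation that the cancellation leaves kernel, cokernel, and all higher terms unchanged, is complete and matches that argument.
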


On the other hand, let $X$ be a quasi-projective variety. Let us denote by
$$
P(X)=\sum_i (-1)^i\text{dim}H^i(X)p^i
$$
the \emph{virtual} Poincar\'e polynomial of $X$. Let $e(X):=\sum_i (-1)^i\text{dim}H^i(X)$ be the \emph{virtual} Euler number of the variety $X$. The virtual Poincar\'e polynomial has the following \emph{motivic} properties.
\begin{prop}
\begin{enumerate}
\item $P(X)=P(X-Z)+P(Z)$ for a closed subvariety $Z$ of $X$.
\item Let $X$ and $Y$ be quasi-projective varieties. Let $\pi: X\lr Y$ be a Zariski locally trivial fibration with fiber $F$. Then $P(X)=P(Y)\cdot P(F)$.
\item Let $f:X\lr Y $ be a bijective morphism. Then $P(X)=P(Y)$.
\end{enumerate}
In (2), if the fiber is $F\cong Gr(k,n)$, the same conclusion holds even though $\pi$ is an analytic fibration (\cite[Lemma 3.1]{bak}).
\end{prop}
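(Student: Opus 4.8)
All three assertions are the standard ``motivic'' properties of the virtual Poincar\'e polynomial, and the cleanest way to obtain them is to realize $P$ through the mixed Hodge structure that Deligne attaches to the compactly supported cohomology $H^i_c(X,\QQ)$ of a complex variety. The plan is first to define, for a general quasi-projective $X$,
\[
P(X)=\sum_{i,k}(-1)^i\dim_{\QQ}\mathrm{Gr}^W_{2k}H^i_c(X,\QQ)\,p^{k},
\]
where $W$ is the weight filtration and the variable $p$ records half of the weight; the half-weights are integers in every case we use because all the moduli spaces occurring in this paper carry algebraic cell decompositions, so their cohomology is of Tate type and concentrated in even degrees. This is the honest definition of the ``virtual'' invariant, and it agrees with the written formula $\sum_i(-1)^i\dim H^i(X)p^i$ precisely when $X$ is smooth and proper (in particular for all the projective moduli spaces considered here, where $H^i_c=H^i$), which is the only case in which the naive formula is literally correct. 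Once $P$ is placed in this framework, the three properties become formal.

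For (1), I would invoke the long exact sequence of compactly supported cohomology attached to the open--closed decomposition $X=(X-Z)\sqcup Z$, namely
\[
\cdots \lr H^i_c(X-Z)\lr H^i_c(X)\lr H^i_c(Z)\lr H^{i+1}_c(X-Z)\lr\cdots .
\]
By Deligne this is a long exact sequence of mixed Hodge structures, and since $\mathrm{Gr}^W_{\bullet}$ is an exact functor on the abelian category of mixed Hodge structures, applying $\mathrm{Gr}^W_{2k}$ keeps it exact. The alternating sum of dimensions along a long exact sequence vanishes, so, weight by weight, the integers $\chi_{2k}(\cdot):=\sum_i(-1)^i\dim\mathrm{Gr}^W_{2k}H^i_c(\cdot)$ satisfy $\chi_{2k}(X)=\chi_{2k}(X-Z)+\chi_{2k}(Z)$; multiplying by $p^k$ and summing over $k$ yields $P(X)=P(X-Z)+P(Z)$.

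Property (2) I would reduce to (1) together with multiplicativity for products. Multiplicativity $P(A\times B)=P(A)P(B)$ follows from the K\"unneth isomorphism $H^\bullet_c(A\times B)\cong H^\bullet_c(A)\otimes H^\bullet_c(B)$, which is an isomorphism of mixed Hodge structures. For a Zariski locally trivial fibration $\pi\colon X\lr Y$ with fiber $F$ one stratifies $Y=\bigsqcup_\alpha Y_\alpha$ into locally closed pieces over which $\pi$ is the projection $Y_\alpha\times F\lr Y_\alpha$; applying (1) to this stratification and then multiplicativity gives $P(X)=\sum_\alpha P(Y_\alpha)P(F)=P(Y)P(F)$. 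The Grassmannian case is genuinely different, because analytic local triviality provides no algebraic stratification; here one instead argues that $H^\bullet(Gr(k,n))$ is even, of Tate type, and spanned by algebraic (Schubert) classes, so the Leray spectral sequence of $\pi$ degenerates and the Leray--Hirsch theorem applies even for an analytic fibration. This is exactly the content of the cited \cite[Lemma 3.1]{bak}.

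The main obstacle is (3), since a bijective morphism need not be an isomorphism and so (2) does not apply directly. Here I would use that we work over $\CC$: a bijective morphism of reduced finite type $\CC$-schemes has degree one onto each dominant component, hence is birational onto its image, so by Zariski's main theorem there is a dense open $V\subseteq Y$ over which $f$ restricts to an isomorphism $f^{-1}(V)\xrightarrow{\sim}V$. Writing $Z=Y-V$, additivity gives $P(Y)=P(V)+P(Z)$ and $P(X)=P(f^{-1}(V))+P(f^{-1}(Z))=P(V)+P(f^{-1}(Z))$, while $f^{-1}(Z)\lr Z$ is again bijective with $\dim Z<\dim Y$. Noetherian induction on dimension then forces $P(f^{-1}(Z))=P(Z)$ and hence $P(X)=P(Y)$. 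The delicate points to watch are reducedness, the bookkeeping over several irreducible components, and the degree-one claim in characteristic zero; these are where the argument needs care, but none presents a serious difficulty.
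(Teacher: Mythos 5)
The paper itself offers no proof of this proposition --- it is stated as standard background, with a citation only for the final assertion about analytic $Gr(k,n)$-fibrations --- so your proposal has to be measured against the standard arguments rather than against anything in the text. Your overall route (compactly supported cohomology with Deligne's mixed Hodge structure, additivity from the long exact sequence of the open--closed pair plus exactness of $\mathrm{Gr}^W$, stratification for Zariski locally trivial fibrations, generic isomorphism plus Noetherian induction for bijective morphisms) is exactly the standard one, and your proofs of (1) and (3) are correct: they use only additivity and isomorphism invariance, which your invariant does satisfy.

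The genuine gap is in (2), and it is created by your definition. By keeping only the even-weight graded pieces $\mathrm{Gr}^W_{2k}H^i_c$, you produce an invariant that is additive but \emph{not} multiplicative, so the K\"unneth step fails. Concretely, $\mathrm{Gr}^W_{2k}$ of a tensor product of mixed Hodge structures contains the terms $\mathrm{Gr}^W_a\otimes\mathrm{Gr}^W_b$ with $a+b=2k$ and $a,b$ both odd; your invariant discards these on the factors but not on the product. For an elliptic curve $E$ your polynomial gives $P(E)=1+p$, while for $E\times E$ it gives $1+6p+p^2\neq(1+p)^2$, because four of the six weight-two classes in $H^2(E\times E)$ come from $H^1\otimes H^1$. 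So with your definition, statement (2) --- which the proposition asserts for arbitrary quasi-projective $X$ and $Y$ --- is false already for the trivial fibration $E\times E\lr E$. The repair is standard: keep all weights with a sign, setting $P(X;t)=\sum_{i,k}(-1)^{i+k}\dim_\QQ \mathrm{Gr}^W_k H^i_c(X,\QQ)\,t^k$, i.e.\ the E-polynomial specialization $E(X;-t,-t)$. This is additive by exactly your argument for (1), multiplicative by K\"unneth with no parity bookkeeping, equals the classical Poincar\'e polynomial in $t$ for smooth projective $X$, and reduces to the paper's polynomial in $p=t^2$ on all spaces actually used in the paper, since those have Tate-type cohomology with no odd-weight classes; your arguments for (1), (2), (3) then go through verbatim. (A smaller related slip: even for smooth proper $X$, your $P$ does not literally agree with the paper's written formula $\sum_i(-1)^i\dim H^i(X)p^i$ --- they differ by the substitution $p\mapsto p^2$ --- but this inconsistency is really in the paper's own definition, whose subsequent computations all treat $p$ as the weight-two variable.)
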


\subsection{Wall crossing between $\bM^{\infty}(5,2)$ and $\bM^{+}(5,2)$}
The possible types of strictly semistable pairs are given in the following table.
\begin{center}
\begin{tabular}{|l|p{8cm}|}
\hline
\multicolumn{2}{|l|}{$(d,\chi)=(5,2)$} \\
\hline
$\alpha$
&
Types of the JH-filtration of the pair $(1,(5,2))$ at $\alpha$\\
\hline
$18$&
$(1,(4,-2)\oplus (0,(1,4))$\\
\hline
$13$&
$(1,(4,-1))\oplus (0,(1,3))$\\
\hline
$8$&
$(1,(4,0))\oplus (0,(1,2))$\\
\hline
$3$&
$(1,(4,1))\oplus (0,(1,1))$\\
\hline
$3$&
$(1,(3,0))\oplus (0,(2,2))$\\
\hline
$3$&
$(1,(3,0))\oplus (0,(1,1))\oplus (0,(1,1))$\\
\hline
$\frac{1}{2}$&
$(1,(3,1))\oplus (0,(2,1))$\\
\hline
\end{tabular}
\end{center}
Here $(1, (d,\chi))$ (resp. $(0, (d,\chi))$) denotes the pair $(s,F)$ with a nonzero (resp. zero) section $s$ and the Hilbert polynomial $\chi(F(m))=dm+\chi$.
All the wall crossings except at $\alpha=3$ are simple. The wall occurs by following the configuration of points in quintic curves (Remark \ref{geo}). In this subsection, we will describe the wall crossing for the computation of the virtual Poincar\'e polynomial of the space $\bM^{\infty}(5,2)$.

Let us denote the $C_{\alpha}^{+}$ (resp. $C_{\alpha}^{-}$) by the wall crossing locus of the moduli space $\bM^{\alpha -\epsilon}(5,2)$ (resp. $\bM^{\alpha +\epsilon}(5,2)$) for sufficient small $\epsilon>0$. During the following lemmas, we use that $\bM(1,\chi)\cong \bM(1,1)$ by $F\mapsto F(-\chi+1)$ and $\bM(1,1)\cong \PP^2$. Let us start with the study of the wall crossing at $\alpha=18$. It turns out that the wall crossing locus $C_{18}^{-}$ is \emph{not} a projective bundle over its base space.
\begin{lemm}\label{wall18}
The wall crossing locus $C_{18}^{+}$ at $\alpha=18$ is a $\PP^{7}$-bundle over the product space $\PP^2\times \PP^{14}$. The locus $C_{18}^{-}$ is a $\PP^{3}$-bundle over $\PP^2\times \PP^{14}-D$ where $D= \PP^2\times \PP^{9}$ and a $\PP^4$-bundle over $D$.
\end{lemm}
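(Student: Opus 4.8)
The plan is to realize each of the two wall-crossing loci at $\alpha=18$ as the projectivization of a relative $\Ext^1$-sheaf over the product $\PP^2\times\PP^{14}$, and then to locate the jumping locus $D$ by a Serre-duality computation. First I would fix the $S$-equivalence class at the wall. From the table the strictly semistable pairs at $\alpha=18$ have the two Jordan--H\"older factors $\Lambda_1=(s,F_1)$ with $P_{F_1}(m)=4m-2$ and $\Lambda_2=(0,F_2)$ with $P_{F_2}(m)=m+4$. Since $n=\chi-\tfrac{d(3-d)}{2}=0$ for $(d,\chi)=(4,-2)$, the factor $\Lambda_1$ is an $\infty$-stable pair with $F_1=\mathcal{O}_C$ for a quartic $C$, hence parameterized by $\bM^{\infty}(4,-2)=\bB(4,0)\cong\PP^{14}$. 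The factor $F_2$ is a degree-one sheaf with $\chi=4$, so $F_2\cong\mathcal{O}_L(3)$ for a line $L$, and after the twist $F\mapsto F(-\chi+1)$ it lies in $\bM(1,1)\cong\PP^2$. Thus the common base of both loci is $B:=\PP^2\times\PP^{14}$, the first factor recording $L$ and the second the quartic $C$.

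Next I would identify which extension direction survives on each side. Comparing reduced Hilbert polynomials at $\alpha=18\pm\epsilon$ shows that the constant term of $\frac{4m-2+\alpha}{4}-\frac{5m+2+\alpha}{5}$ equals $\frac{\alpha-18}{20}$, so the section-carrying factor $\Lambda_1$ appears as a quotient on the $\bM^{\infty}$ side and as a sub on the other side. Hence $C^{+}_{18}$ consists of the non-split extensions $0\to\Lambda_2\to\Lambda\to\Lambda_1\to0$, classified by $\Ext^1(\Lambda_1,\Lambda_2)$, while $C^{-}_{18}$ consists of $0\to\Lambda_1\to\Lambda\to\Lambda_2\to0$, classified by $\Ext^1(\Lambda_2,\Lambda_1)$. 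These groups I would compute with Lemma \ref{defcoh}. Because $\Lambda_2$ has zero section, the sequence collapses to $\Ext^i(\Lambda_2,\Lambda_1)\cong\Ext^i(F_2,F_1)$; whereas for $\Ext^1(\Lambda_1,\Lambda_2)$ the section of $\Lambda_1$ contributes a term $\Hom(s,H^0(F_2))\cong H^0(\mathcal{O}_L(3))\cong\CC^4$, and $H^1(F_2)=0$. Using the Hirzebruch--Riemann--Roch identity $\sum_i(-1)^i\dim\Ext^i(F,G)=-\deg F\cdot\deg G$ for one-dimensional sheaves on $\PP^2$, I get $\chi(F_2,F_1)=\chi(F_1,F_2)=-4$, and for $L\not\subset C$ both $\Hom$ and $\Ext^2$ vanish, so $\dim\Ext^1(F_i,F_j)=4$. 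This yields $\dim\Ext^1(\Lambda_2,\Lambda_1)=4$ and $\dim\Ext^1(\Lambda_1,\Lambda_2)=4+4=8$, i.e.\ generic fibers $\PP^3$ and $\PP^7$.

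The heart of the argument is the jump of $\dim\Ext^1(F_2,F_1)$. By Serre duality $\Ext^2(F_2,F_1)\cong\Hom(F_1,F_2\otimes\mathcal{O}(-3))^{*}=\Hom(\mathcal{O}_C,\mathcal{O}_L)^{*}$, which is zero exactly when $L\not\subset C$ and is one-dimensional (spanned by the restriction $\mathcal{O}_C\twoheadrightarrow\mathcal{O}_L$) precisely when $L\subset C$; one checks that $\Hom(F_2,F_1)$ stays zero there, so the increase is entirely through $\Ext^2$. Thus $\dim\Ext^1(F_2,F_1)$ rises from $4$ to $5$ exactly on $D=\{(L,C):L\subset C\}$, giving the $\PP^4$-fiber. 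Since a quartic containing a fixed line $L$ is $L$ together with an arbitrary cubic, $D$ is a $\PP^9$-bundle over $\PP^2$, i.e.\ $D\cong\PP^2\times\PP^9$ for the purposes of the motivic computation. A parallel but easier check shows that on $C^{+}_{18}$ the extra $\Hom(F_1,F_2)$ appearing over $D$ is exactly cancelled by the section map $\Hom(F_1,F_2)\to H^0(F_2)$, which I expect to be an isomorphism there; hence $\dim\Ext^1(\Lambda_1,\Lambda_2)\equiv8$ and $C^{+}_{18}$ stays a genuine $\PP^7$-bundle over all of $B$.

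Finally I would globalize: over the universal family on $B$ the relative $\Ext^1$-sheaves are governed by cohomology-and-base-change, and the rank counts above show they are locally free of rank $8$ on $B$ (for $C^{+}_{18}$) and of ranks $4$ on $B\setminus D$ and $5$ on $D$ (for $C^{-}_{18}$); their projectivizations are the asserted bundles, and the universal extension identifies them with the moduli of $\alpha$-stable pairs just off the wall. The main obstacle I anticipate is exactly this last, non-bundle, point: controlling cohomology-and-base-change across $D$ to prove that the relative Ext sheaf for $C^{-}_{18}$ is locally free of the two stated constant ranks on the two strata (so that each projectivization is Zariski-locally trivial), and verifying that every pair in the flip locus is a \emph{unique} non-split extension of the expected type, so that the projectivized extension spaces coincide with $C^{\pm}_{18}$ rather than merely surjecting onto them.
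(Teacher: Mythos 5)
Your proposal is correct and takes essentially the same route as the paper's proof: both identify the Jordan--H\"older factors as $(1,\cO_C)$ and $(0,\cO_L(3))$ over the base $\PP^{14}\times\PP^2$, realize $C_{18}^{\pm}$ as projectivizations of the pair-extension groups, reduce those to sheaf Ext-groups via Lemma \ref{defcoh}, and locate the jump along $D=\{L\subset C\}$ by Serre duality $\Ext^2(F_{m+4},F_{4m-2})\cong\Hom(\cO_C,\cO_L)^*$. The only cosmetic differences are that you compute the Euler pairing by Riemann--Roch for one-dimensional sheaves where the paper uses the free resolutions of $\cO_C$ and $\cO_L(3)$, and that you make explicit (correctly) why the $+$-side fiber dimension remains $8$ over $D$, a point the paper leaves implicit in its formula $h^0(F_{m+4})-\chi(F_{4m-2},F_{m+4})$.
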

\begin{proof}
By the analysis of the wall at $\alpha=18$, the $18+\epsilon$-stable pairs $(1,F)$ in $C_{18}^{+}$ fits into a non-split exact sequence
\begin{equation}\label{eq1}
\ses{(0,F_{m+4})}{(1,F)}{(1,F_{4m-2})},
\end{equation}
where $F_{dm+\chi}$ denotes any semistable sheaf with Hilbert polynomial $dm+\chi$. Also, one can easily check that all the pairs fitting in a non-split exact sequence as \eqref{eq1} are $\alpha+\epsilon$-stable. Thus the wall $C_{18}^+$ is a $\PP(\Ext^1((1,F_{4m-2}),(0,F_{m+4})))$-bundle over $\bM^{\infty}(4,-2)\times \bM(1,4)\cong\PP^{14}\times\PP^2$. Here, $\bM^{\infty}(4,-2)\cong \bB(4,0)=\PP^{14}$ by \cite[Lemma 2.3]{cc1} and $\bM(1,4)\cong \bM(1,1)=\PP^2$ by $F\mapsto F(-3)$. Let $\chi(F)=h^0(F)-h^1(F)$. Let $\chi(F,F')=\text{dim}\Ext^0(F,F') -\text{dim}\Ext^1(F,F')+\text{dim}\Ext^2(F,F')$. Since $\Ext^0((1,F_{4m-2}),(0,F_{m+4}))=0$ and $H^1(F_{m+4})=0$, from the exact sequence in Lemma \ref{defcoh}, we obtain that
$$
\text{dim}\Ext^1((1,F_{4m-2}),(0,F_{m+4}))=h^0(F_{m+4})-\chi(F_{4m-2},F_{m+4}).
$$
Note that $F_{4m-2}\cong \cO_C$ and $F_{m}\cong\cO_L(3)$ for some quartic curve $C$ and a line $L$. By using the resolution of $F_{4m-2}$, we obtain that $\text{dim}\Ext^1((1,F_{4m-2}),(0,F_{m+4}))=8$.

Similar argument shows that the wall $C_{18}^-$ is a $\PP(\Ext^1((0,F_{m+4}),(1,F_{4m-2})))$-fibration over $\bM^{\infty}(4,-2)\times \bM(1,4)\cong\PP^{14}\times\PP^2$. From the exact sequence in Lemma \ref{defcoh} again, one can see that
\begin{equation}\label{eq5}
\Ext^1((0,F_{m+4}),(1,F_{4m-2}))\cong \Ext^1(F_{m+4},F_{4m-2}).
\end{equation}
From the short exact sequence $\ses{\cO(2)}{\cO(3)}{F_{m+4}}$,
\begin{align*}
0\lr&\Ext^1(F_{m+4},F_{4m-2})\lr H^1(F_{4m-2}(-3))\lr H^1(F_{4m-2}(-2))\lr\\
& \Ext^2(F_{m+4},F_{4m-2})\lr 0.
\end{align*}
By Serre duality, $\Ext^2(F_{m+4},F_{4m-2})\cong \Ext^0(F_{4m-2},F_{m+1})$. But the later space is zero if $L \nsubseteq C$ and $\CC$ otherwise. So we have,
\[\Ext^1(F_{m+4},F_{4m-2})\simeq \begin{cases}
\CC^4 & \text{if }L \nsubseteq C,\\
\CC^5 & \text{if }L\subseteq C.\\
\end{cases} \]
Applying this fact in \eqref{eq5}, we get the result.
\end{proof}
\begin{rema}
The moduli space $\bM^{\infty}(5,2)$ is not smooth. In fact, let $(1,F)$ be a $\infty$-stable pair fitting into a non-split exact sequence in \eqref{eq1} such that $F_{m+4}\cong\cO_L(3)$ and $F_{4m-2}\cong\cO_{C\cdot L}$ for some line $L$ and cubic curve $C$.
Applying the functor $\Ext^\bullet(-,(1,F))$ (resp. $\Ext^\bullet((0, F_{m+4}),-)$) to \eqref{eq1}, we obtain
\[
\Ext^2((1,F_{4m-2}),(1,F))\lr \Ext^2((1,F),(1,F)) \stackrel{a}{\rightarrow} \Ext^2((0, F_{m+4}),(1,F))
\]
(resp.
\[
\Ext^2((0, F_{m+4}),(0,F_{m+4}))\lr \Ext^2((0, F_{m+4}),(1,F))\stackrel{b}{\rightarrow} \Ext^2((0, F_{m+4}),(1, F_{4m-2}))).
\]
By some diagram chasing and Lemma \ref{defcoh}, one can check that the composition map
\[
b\circ a:\Ext^2((1,F),(1,F))\lr \Ext^2((0,F_{m+4}),(1,F_{4m-2}))
\]
is an isomorphism. The second term $\Ext^2((0,F_{m+4}),(1,F_{4m-2}))\cong \Ext^2(F_{m+4},F_{4m-2})$ is isomorphic to $\CC$ by the proof of Lemma \ref{wall18}.
Also $\Ext^0((1,F),(1,F))\cong \CC$ by the $\infty$-stability of the pair $(1,F)$. But $\chi((s,F),(s,F)):=\sum_{i} (-1)^i\text{dim}\Ext^i((s,F),(s,F))=-26$ for all $(s,F)\in \bM^{\infty}(5,2)$ by Lemma \ref{defcoh} and \cite[Lemma 6.13]{HL}. Therefore, we have $\Ext^1((1,F),(1,F))\cong \CC^{28}$. This implies that $\bM^{\infty}(5,2)$ is not smooth at $(1,F)$ because by \cite[Lemma 4.10]{mhe} we have $$\text{dim}_{(1,F)}\bM^\infty (5,2)=27< 28=\text{dim}T_{(1,F)}\bM^\infty (5,2).$$
\end{rema}
\begin{lemm}\label{wall13}
\begin{enumerate}
\item The wall crossing locus $C_{13}^{+}$ (resp. $C_{13}^{-}$) at $\alpha=13$ is a $\PP^{6}$ (resp. $\PP^{3}$)-bundle over the product space $\PP^2\times \bB(4,1)$.
\item The locus $C_{8}^{+}$ (resp. $C_{8}^{-}$) at $\alpha=8$ is a $\PP^{5}$ (resp. $\PP^{3}$)-bundle over the product space $\PP^2\times \bB(4,2)$.
\end{enumerate}
\end{lemm}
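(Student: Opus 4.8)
The plan is to run the argument of Lemma~\ref{wall18} almost verbatim at the two new walls, the only genuinely new input being that the $-$ side now turns out to be a \emph{uniform} bundle. At $\alpha=13$ the strictly semistable pairs have Jordan--H\"older type $(1,(4,-1))\oplus(0,(1,3))$, so the degree-four factor is an $\infty$-stable pair $(1,F_{4m-1})$ and the line factor is $(0,F_{m+3})$ with $F_{m+3}\cong\cO_L(2)$; at $\alpha=8$ the type is $(1,(4,0))\oplus(0,(1,2))$, giving $(1,F_{4m})$ and $(0,F_{m+2})$ with $F_{m+2}\cong\cO_L(1)$. First I would identify the base of each fibration as the product of the moduli of the two factors: $\bM^{\infty}(4,-1)\cong\bB(4,1)$ and $\bM^{\infty}(4,0)\cong\bB(4,2)$ by the formula $n=\chi-\tfrac{d(3-d)}2$, together with $\bM(1,3)\cong\bM(1,2)\cong\bM(1,1)\cong\PP^2$. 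Exactly as for \eqref{eq1}, a pair in $C_\alpha^{+}$ sits in a non-split extension of $(1,F_{\mathrm{pair}})$ by $(0,F_{\mathrm{line}})$ and a pair in $C_\alpha^{-}$ in the reversed extension, so the two loci are $\PP\big(\Ext^1((1,F_{\mathrm{pair}}),(0,F_{\mathrm{line}}))\big)$- and $\PP\big(\Ext^1((0,F_{\mathrm{line}}),(1,F_{\mathrm{pair}}))\big)$-bundles over $\PP^2\times\bB(4,1)$ (resp.\ $\PP^2\times\bB(4,2)$).

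For the $+$ side I would feed Lemma~\ref{defcoh} the vanishings $\Hom((1,F_{\mathrm{pair}}),(0,F_{\mathrm{line}}))=0$ and $H^1(F_{\mathrm{line}})=0$. The first I expect to hold over the whole base, not merely generically: writing the section as $\cO\xrightarrow{s}F_{\mathrm{pair}}$ with $0$-dimensional cokernel $T$, any pair morphism to $(0,F_{\mathrm{line}})$ kills $s$, hence factors through the torsion sheaf $T$, which admits no nonzero map to the pure sheaf $\cO_L(k)$. Together with $\Ext^2(F_{\mathrm{pair}},F_{\mathrm{line}})\cong\Hom(F_{\mathrm{line}},F_{\mathrm{pair}}\otimes K_{\PP^2})^{\vee}=0$ (a nonzero map would embed some $\cO_L(b)$, $b\ge k$, of slope $\ge 3$ into the negatively twisted $F_{\mathrm{pair}}\otimes K_{\PP^2}$, contradicting its semistability), Lemma~\ref{defcoh} gives $\dim\Ext^1=h^0(F_{\mathrm{line}})-\chi(F_{\mathrm{pair}},F_{\mathrm{line}})$. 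The Euler pairing of one-dimensional sheaves on $\PP^2$ is $\chi(F,F')=-\deg F\cdot\deg F'=-4$, so the fibre dimensions are $h^0(\cO_L(2))+4=7$ and $h^0(\cO_L(1))+4=6$, i.e.\ $\PP^6$ and $\PP^5$.

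For the $-$ side the section of the sub-object $(0,F_{\mathrm{line}})$ is zero, so all the $\Hom(s,H^i(F'))$ terms in Lemma~\ref{defcoh} drop out and $\Ext^1((0,F_{\mathrm{line}}),(1,F_{\mathrm{pair}}))\cong\Ext^1(F_{\mathrm{line}},F_{\mathrm{pair}})$, whose dimension equals $\dim\Hom(F_{\mathrm{line}},F_{\mathrm{pair}})+\dim\Ext^2(F_{\mathrm{line}},F_{\mathrm{pair}})-\chi(F_{\mathrm{line}},F_{\mathrm{pair}})$. I would show both correction terms vanish over all of $\PP^2\times\bB(4,n)$. The $\Hom$ vanishes because a nonzero map $\cO_L(k)\to F_{\mathrm{pair}}$ would put a pure quotient $\cO_L(b)$ ($b\ge k\ge1$, slope $\ge 2$) inside the semistable $F_{\mathrm{pair}}$ of slope $-\tfrac14$ or $0$. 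The term $\Ext^2\cong\Hom(F_{\mathrm{pair}},F_{\mathrm{line}}\otimes K_{\PP^2})^{\vee}$ vanishes because here $F_{\mathrm{line}}\otimes K_{\PP^2}$ is $\cO_L(-1)$ (resp.\ $\cO_L(-2)$), and resolving $F_{\mathrm{pair}}$ shows any such map is governed by $H^0(\cO_L(-1))=H^0(\cO_L(-2))=0$. This negative twist is exactly what is absent at $\alpha=18$, where $F_{m+4}\otimes K_{\PP^2}=\cO_L$ had sections and forced the jump on $\{L\subseteq C\}$; here there is no jump, so the $-$ fibre is constantly $-\chi=4$ and both loci are uniform $\PP^3$-bundles.

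Finally I would promote these pointwise computations to genuine projective bundles: this needs the $\Ext^1$ groups to be the fibres of a locally free sheaf on the product base, which follows from the constancy of rank just established together with cohomology-and-base-change for the relative $\Ext$ of the two flat families, and it needs the standard verification, via He's numerical criterion as already used for \eqref{eq1}, that every non-split extension of the prescribed type is $(\alpha\mp\epsilon)$-stable and that these are exactly the pairs destabilised at the wall. The main obstacle I anticipate is not any single computation but precisely this uniformity: guaranteeing that the four $\Hom$/$\Ext$ vanishings above persist on the closed locus $\{L\subseteq C\}\subseteq\PP^2\times\bB(4,n)$, so that the rank never jumps. The torsion-versus-pure argument on the $+$ side and the negative-twist argument on the $-$ side are exactly the inputs that defeat this obstacle and make the statements clean $\PP^6$-, $\PP^5$- and $\PP^3$-bundles rather than the stratified picture seen at $\alpha=18$.
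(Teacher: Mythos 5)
Your overall strategy is the same as the paper's: identify the bases $\PP^2\times\bB(4,1)$ and $\PP^2\times\bB(4,2)$, realize the two loci as projectivized $\Ext^1$'s of the two extension directions, and compute dimensions from Lemma \ref{defcoh} together with the Euler pairing $\chi(F,F')=-\deg F\cdot\deg F'=-4$ and suitable $\Hom$/$\Ext^2$ vanishings. Your numbers are right: $h^0(\cO_L(2))+4=7$ and $h^0(\cO_L(1))+4=6$ on the plus side, and $\CC^4$, i.e.\ $\PP^3$, on the minus side. In fact your $\CC^4$ is more self-consistent than the paper's own proof, whose bullet list prints $\Ext^1((0,F_{m+3}),(1,F_{4m-1}))\cong\CC^3$ (incompatible both with the $\PP^3$-bundle asserted in the statement and with $\chi=-4$), and whose quoted resolution of $F_{4m-1}$ has Hilbert polynomial $3m+1$; the correct resolution is $0\to 2\cO(-3)\to\cO\oplus\cO(-2)\to F_{4m-1}\to 0$. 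The plus-side argument via the torsion cokernel of the section is exactly the paper's wall-$18$ mechanism and is fine.

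There is, however, a genuine gap at the step you yourself single out as the crux: the uniform vanishing of $\Ext^2(F_{\mathrm{line}},F_{\mathrm{pair}})\cong\Hom(F_{\mathrm{pair}},F_{\mathrm{line}}\otimes K_{\PP^2})^{\vee}$ at $\alpha=13$. Your justification (resolve $F_{\mathrm{pair}}$ and invoke $H^0(\cO_L(-1))=0$) does not work: with the correct resolution above, $\Hom(F_{4m-1},\cO_L(-1))$ only embeds into $\Hom(\cO\oplus\cO(-2),\cO_L(-1))=H^0(\cO_L(-1))\oplus H^0(\cO_L(1))\cong\CC^2$, which is not zero. Worse, no argument using only semistability of $F_{4m-1}$ can succeed, because the vanishing is false at that level of generality: for any stable sheaf $G$ with Hilbert polynomial $3m-1$, every non-split extension $0\to G\to F\to\cO_L(-1)\to 0$ is a stable sheaf of type $(4,-1)$ (a slope check rules out destabilizing subsheaves, since $\cO_L(-1)$ splits off in the only dangerous case), and such $F$ visibly has $\Hom(F,\cO_L(-1))\neq 0$, so its fibre would jump to $\PP^4$ — precisely the stratified behavior of Lemma \ref{wall18}. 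What saves the lemma is that the base is the \emph{pairs} space $\bB(4,1)$: all the bad sheaves $F$ above have $h^0(F)=0$ (a section would lie in $G$ because $H^0(\cO_L(-1))=0$, and its image $\cO_W$, $\deg W\leq 3$, has $\chi(\cO_W)\geq 0$, contradicting stability of $F$), hence never occur over $\bB(4,1)$. So the vanishing must be proved using the section: since $\bM^{\alpha}(4,-1)\cong\bB(4,1)$ for all $\alpha$, the pair is PT-stable, giving $0\to\cO_C\to F_{4m-1}\to T\to 0$ with $T$ zero-dimensional; then $\Hom(T,\cO_L(-1))=0$ and $\Hom(\cO_C,\cO_L(-1))\hookrightarrow H^0(\cO_L(-1))=0$ (restrict along $\cO\twoheadrightarrow\cO_C$), whence $\Hom(F_{4m-1},\cO_L(-1))=0$. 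At $\alpha=8$ your reasoning survives as stated, since quotients of the slope-zero semistable $F_{4m}$ have slope $\geq 0$ while subsheaves of $\cO_L(-2)$ have slope $\leq -1$ (equivalently, both summands of $\cO\oplus\cO(-1)$ restrict negatively to $L$). In short, your diagnosis "negative twist $\Rightarrow$ no jump" is not the operative mechanism at $\alpha=13$; it is the presence of the section that kills the jump, and without that ingredient the minus-side claim at $\alpha=13$ is unproved.
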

\begin{proof}
One can easily check that, if the pair $(1,F_{4m-1})$ (resp. $(1,F_{4m}$) is semistable, so is $F_{4m-1}$ (resp. $F_{4m}$).  Hence the descriptions of the base spaces come from the fact that $\bM^{\alpha}(4,-1)\cong \bB(4,1)$ and $\bM^{\alpha}(4,0)\cong \bB(4,2)$ for all $\alpha$.
Also, the free resolutions of $F_{dm+\chi}$ are given in \cite{maican}.
$$
\ses{2\cO(-2)}{\cO(-1)\oplus \cO}{F_{4m-1}},
$$
$$
\ses{\cO(-2)\oplus \cO(-3)}{\cO\oplus \cO(-1)}{F_{4m}}.
$$
Using this fact and Lemma \ref{defcoh}, we obtain that
\begin{itemize}
\item $\Ext^1((1,F_{4m-1}),(0,F_{m+3}))\cong \CC^7$,
\item $\Ext^1((0,F_{m+3}),(1,F_{4m-1}) )\cong \CC^3$,
\item $\Ext^1((1,F_{4m}),(0,F_{m+2}))\cong \CC^6$, and
\item $\Ext^1((0,F_{m+2}),(1,F_{4m}))\cong \CC^3$.
\end{itemize}
So we have the result in the claim.
\end{proof}
Recall that the wall types at $\alpha=3$ are given by
$$
(1,(4,1))\oplus (0,(1,1)), (1,(3,0))\oplus (0,(2,2))\text{ or } (1,(3,0))\oplus (0,(1,1)) \oplus (0,(1,1)).
$$
Since the wall is not simple, we need more detail calculation.
Obviously, the first two types are general case. The third one is the intersection part. Let $A^+$ (resp. $A^-$) be the locus of the $3+\epsilon$ (resp. $3-\epsilon$)-stable pairs whose JH-filtration type is the first one.
Let $B^+$ (resp. $B^-$) be the locus of the $3+\epsilon$ (resp. $3-\epsilon$)-stable pairs whose JH-filtration type is the second one.
Let $C_3^{+}=A^{+}\cup B^{+}$ and $C_3^{-}=A^{-}\cup B^{-}$.
Let $D^+$ (resp. $D^-$) be the locus of the $3+\epsilon$ (resp. $3-\epsilon$)-stable pairs $(1,F)$ fitting into a non-split exact sequence
$$
\ses{(0,F_{2m+2})}{(1,F)}{(1,F_{3m})}
$$
$$
(\text{resp. } \ses{(1,F_{3m})} {(1,F)}{(0,F_{2m+2})})
$$
such that $F_{2m+2}=F_{m+1} \oplus F'_{m+1}$.
Since the stable pairs in the intersection part may have non-trivial automorphism, we compute the wall crossing separately in Lemma \ref{wallc31} and Lemma \ref{wallc32}.
\begin{lemm}\label{wallc31}
\begin{enumerate}
\item  \begin{enumerate}
         \item The locus $A^+$ is a $\PP^4$-bundle over $\PP^2\times B(4,3)$. The locus $A^+\cap D^+$ is a disjoint union of a $\PP^3$-bundle over a $\PP^3$-bundle over $(\PP^2\times \PP^2-\Delta)\times \bB(3,0)$ (where $\Delta$ is the diagonal of $\PP^2\times \PP^2$) and of a $\PP^2$-bundle over a $\PP^3$-bundle over $\PP^2\times \bB(3,0)$.
         \item The locus $B^+-A^+$ is a $\PP^7$-bundle over $\bM^+(3,0)\times \bM(2,2)^s$. Here the space $\bM(2,2)^s$ consists of the stable sheaves which is isomorphic to $\PP^5-V$ where the $V\cong \text{Sym}^{2}(\PP^2)$ is the space of degenerated conics.
       \end{enumerate}

\item  \begin{enumerate}
           \item The locus $A^-$ is a $\PP^3$-bundle over $\bM(1,1)\times \bM^{+}(4,1)$. The locus $A^-\cap D^-$ is a disjoint union of a $\PP^2$-bundle over a $\PP^2$-bundle over $(\PP^2\times\PP^2-\Delta)\times \bB(3,0)$ and of a $\PP^1$-bundle over a $\PP^2$-bundle over $\PP^2\times \bB(3,0)$.
         \item The locus $B^--A^-$ is a $\PP^5$-bundle over $\bM^+(3,0)\times \bM(2,2)^s$.
       \end{enumerate}
\end{enumerate}
\end{lemm}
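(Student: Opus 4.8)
The plan is to treat the four loci by the extension recipe already used in Lemmas~\ref{wall18} and~\ref{wall13}, and then to resolve the non-simple overlap with $D^\pm$ by an iterated extension analysis.

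For the type-one locus, on the $3+\epsilon$ side every member of $A^+$ sits in a non-split sequence
\[
\ses{(0,F_{m+1})}{(1,F)}{(1,F_{4m+1})},
\]
the section being carried by the quotient, while on the $3-\epsilon$ side the roles of sub and quotient are interchanged,
\[
\ses{(1,F_{4m+1})}{(1,F)}{(0,F_{m+1})}.
\]
As in the quoted lemmas, I would first check that every such non-split extension is $3\pm\epsilon$-stable and, conversely, that every pair in $A^\pm$ so arises, the latter being immediate from its Jordan--H\"older type; then $A^\pm$ is the projectivization of the relevant $\Ext^1$ over the product of the two factor moduli: $\bM(1,1)\cong\PP^2$ times the degree-four pair space, which is $\bM^\infty(4,1)\cong\bB(4,3)$ on the $+$ side and $\bM^+(4,1)$ on the $-$ side. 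The fibre dimensions follow from Lemma~\ref{defcoh} together with the Maican resolution of $F_{4m+1}$ and an Euler characteristic and Serre duality count, which I expect to give $\dim\Ext^1((1,F_{4m+1}),(0,F_{m+1}))=5$ and $\dim\Ext^1((0,F_{m+1}),(1,F_{4m+1}))=4$, hence the $\PP^4$- and $\PP^3$-bundles. The same scheme applied to
\[
\ses{(0,F_{2m+2})}{(1,F)}{(1,F_{3m})}
\]
and its reverse, using $\ses{\cO(-3)}{\cO}{F_{3m}}$, should yield $8$ and $6$ for the two $\Ext^1$'s and thus the $\PP^7$- and $\PP^5$-bundles of $B^\pm$; restricting the base to the stable conics $\bM(2,2)^s\cong\PP^5-V$ is exactly what cuts out $B^\pm-A^\pm$, since the overlap $A^\pm\cap B^\pm$ is precisely the locus on which the conic factor splits.

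The genuinely new feature is the overlap $A^\pm\cap D^\pm$, where $F_{2m+2}\cong F_{m+1}\oplus F'_{m+1}$, so that $(1,F)$ carries both a type-one and a type-two filtration and the Jordan--H\"older length jumps to three. On this locus I would reconstruct $F$ from the cubic $F_{3m}\cong\cO_C$ and the two line sheaves by a two-step extension: first extend $(1,\cO_C)$ by one line sheaf, then extend the result by the other. The two steps are governed by $\Ext^1$'s of dimension $4$ on the $+$ side, giving the nested $\PP^3$-over-$\PP^3$-bundle, and of dimension $3$ on the $-$ side, giving the $\PP^2$-over-$\PP^2$-bundle, with base the ordered pair of line sheaves in $\bM(1,1)\cong\PP^2$ together with the cubic in $\bB(3,0)\cong\PP^9$. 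When the two line sheaves are distinct one stays over $\PP^2\times\PP^2-\Delta$ and the construction is clean. When they coincide, the repeated factor contributes a $\mathbb{G}_m$-automorphism acting on the outer extension space, which simultaneously collapses $\PP^2\times\PP^2$ to its diagonal $\PP^2$ and lowers the outer fibre by one; this produces the second, $\Delta$-supported component, a $\PP^2$-over-$\PP^3$-bundle on the $+$ side and a $\PP^1$-over-$\PP^2$-bundle on the $-$ side. The two components are disjoint because the two lines are either equal or distinct, which yields the asserted disjoint unions.

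The hard part will be the equal-factor, length-three stratum. One must confirm that the non-trivial automorphisms of the repeated line factor do not obstruct a global bundle structure, single out exactly which extension classes give $3\pm\epsilon$-\emph{stable} rather than merely semistable pairs, and verify that after dividing by these automorphisms the outer projective fibre is precisely $\PP^2$ (resp. $\PP^1$) over the diagonal $\PP^2$. Controlling the interaction of this $\mathbb{G}_m$-action with the nested extension globally over the family $\bB(3,0)$, rather than only fibrewise, is where the real work lies; by contrast the $\Ext^1$-dimension counts are routine consequences of Lemma~\ref{defcoh} and the resolutions of~\cite{maican}.
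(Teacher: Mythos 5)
Your handling of $A^\pm$ and of $B^\pm-A^\pm$ follows the paper's argument exactly: parameterize non-split extensions over the product of the factor moduli spaces and compute the fibers from Lemma \ref{defcoh} together with the resolutions of \cite{maican}; your dimension counts $5$, $4$, $8$, $6$ agree with the paper's \eqref{eq111}, \eqref{eq113} and the corresponding minus-side computations, and your remark that $B^\pm\cap A^\pm$ is the degenerate-conic locus is the paper's reason for restricting the base to $\bM(2,2)^s\cong\PP^5-V$. The genuine gap is in $A^\pm\cap D^\pm$, where your iterated-extension scheme, as stated, parameterizes the wrong locus and your outer fiber dimension is unjustified.

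Concretely, on the $+$ side, after fixing the inner extension $\ses{(0,F'_{m+1})}{(1,F_{4m+1})}{(1,F_{3m})}$ (your first step, correctly a $\PP^3=\PP(\Ext^1((1,F_{3m}),(0,F'_{m+1})))$), the outer extensions of $(1,F_{4m+1})$ by $(0,F_{m+1})$ are classified by $\Ext^1((1,F_{4m+1}),(0,F_{m+1}))\cong\CC^5$ --- the very group that produces the $\PP^4$-fiber of $A^+$ in \eqref{eq111} --- not by a $\CC^4$. A general class there gives a pair $(1,F)$ with a three-step filtration whose graded pieces are the two line sheaves and $(1,F_{3m})$, which is strictly weaker than membership in $D^+$: the latter requires the \emph{direct sum} $F_{m+1}\oplus F'_{m+1}$ to embed in $F$. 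The missing idea, which is the heart of the paper's proof, is that $(1,F)\in D^+$ if and only if the outer class lies in the kernel of the restriction map $\xi:\Ext^1((1,F_{4m+1}),(0,F_{m+1}))\lr\Ext^1((0,F'_{m+1}),(0,F_{m+1}))$, i.e. pulls back trivially along $(0,F'_{m+1})\hookrightarrow(1,F_{4m+1})$; the paper proves $\xi$ is surjective (because $\Ext^2((1,F_{3m}),(0,F_{m+1}))=0$) and computes the target $\Ext^1(F'_{m+1},F_{m+1})$ to be $\CC$ for distinct lines and $\CC^2$ for coincident lines, so the outer fiber is $\PP(\ker\xi)\cong\PP^3$, resp. $\PP^2$ (and $\PP^2$, resp. $\PP^1$, on the minus side). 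Thus your ``dimension $4$'' is correct only as $\dim\ker\xi$ in the distinct-line case, not as an $\Ext^1$-dimension, and your mechanism for the diagonal stratum is wrong on both counts: pairs in $A^\pm\cap D^\pm$ are $3\pm\epsilon$-stable, hence simple, so no $\mathbb{G}_m$-quotient enters this lemma at all (automorphisms of $(0,F_{m+1}\oplus F'_{m+1})$ matter only for $D^\pm$ itself, which is why the paper separates that analysis into Lemma \ref{wallc32}), and quotienting a projective fiber by $\mathbb{G}_m$ would in any case not produce a projective space of one lower dimension. The drop from $\PP^3$ to $\PP^2$ (resp. $\PP^2$ to $\PP^1$) over the diagonal is caused solely by the jump $\dim\Ext^1(F_{m+1},F_{m+1})=2$ versus $\dim\Ext^1(F'_{m+1},F_{m+1})=1$, which cuts $\ker\xi$ down by one further dimension. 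Without the $\ker\xi$ characterization and its surjectivity, your construction yields a $\PP^4$-bundle (the restriction of $A^+$ over a sub-base) rather than the asserted $\PP^3$-over-$\PP^3$ structure, so the stated description of $A^\pm\cap D^\pm$ cannot be reached.
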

\begin{proof}
For $\alpha=3+\epsilon$, the $\alpha$-stable pairs $(1,F)$ in $A^+$ fit into a non-split exact sequence
\begin{equation}\label{eq2}
0\lr(0,F_{m+1})\lr(1,F)\lr (1,F_{4m+1})\lr0.
\end{equation}
Also one can easily check that all of the non-split extension in the equation above are $\alpha$-stable. Thus $A^{+}$ is a $\PP(\Ext^1((1,F_{4m+1}),(0,F_{m+1})))$-bundle over $\bM(1,1)\times \bM^{\infty}(4,3)$. Note that $\bM^{\alpha}(4,1)\cong \bM^\infty(4,1)\cong \bB(4,3)$ for $\alpha >3$. By direct computation, we know that
\begin{equation}\label{eq111}
\Ext^1((1,F_{4m+1}),(0,F_{m+1}))\cong \CC^5.\end{equation}
If $(1,F)\in A^+\cap D^{+}$ in \eqref{eq2}, the pair $(1,F_{4m+1})$ should fit into the exact sequence
\begin{equation}\label{eq3}
0\lr(0,F_{m+1}')\lr (1,F_{4m+1})\lr (1,F_{3m})\lr0.
\end{equation}
By the long exact sequence obtained by \eqref{eq3}, we see
$$\Ext^1((1,F_{4m+1}),(0,F_{m+1})) \stackrel{\xi}{\rightarrow}\Ext^1((0,F_{m+1}'),(0,F_{m+1}))\lr \Ext^2((1,F_{3m}),(0,F_{m+1})).$$
But the last term is $\Ext^2((1,F_{3m}),(0,F_{m+1}))=0$ because $H^1(F_{m+1})=0$ and $\Ext^2(F_{3m},F_{m+1})\cong\Ext^0(F_{m+1},F_{3m}(-3))=0$ by the stability of $F_{m+1}$. That is, the map $\xi$ is surjective.
Then the central term $(1,F)$ of the non split extension \eqref{eq2} lie in the space $D^+$ if and only if $\xi$ is zero if applied to the class of \eqref{eq2}.
This is because, by definition, the image of the class of \eqref{eq2} in $\Ext^1((1,F_{4m+1}),(0,F_{m+1}))$ by $\xi$ corresponds to the pullback class of \eqref{eq2} in $\Ext^1((0,F_{m+1}'),(0,F_{m+1}))$ via the morphism $(0,F_{m+1}')\hookrightarrow (1,F_{4m+1})$.

But we know that
\[\Ext^1((0, F_{m+1}'),(0,F_{m+1}))= \Ext^1(F_{m+1}',F_{m+1})\simeq \begin{cases}
\CC & \text{if }F_{m+1}' \neq F_{m+1},\\
\CC^2 & \text{if } F_{m+1}'=F_{m+1}.\\
\end{cases} \]
Thus the kernel of $\xi$ depends on the choices of $F_{m+1}'$ and $F_{m+1}$.
Note that the classes of non-split extensions as \eqref{eq3} are parameterized by \begin{equation}\label{eq112}\PP(\Ext^1((1,F_{3m}),(0,F'_{m+1})))\cong \PP^3.\end{equation} Combining with this fact, we get the result (1)-(a).

The stable pairs in $B^+-A^+$ are supported on a quintic curve with smooth conic as a component. Hence,
the locus $B^{+}-A^{+}$ is a $\PP(\Ext^1((1,F_{3m}),(0,F_{2m+2}))$-bundle over $\bM^+(3,0)\times \bM(2,2)^s$. By using the resolution of the sheaves, we see that \begin{equation}\label{eq113}\Ext^1((1,F_{3m}),(0,F_{2m+2}))\cong \CC^{8}\end{equation} and so we get (1)-(b).

The proof of the case $\alpha=3-\epsilon$ is the same as that of $\alpha=3+\epsilon$ except that
\begin{itemize}
\item $\Ext^1((0,F_{m+1}),(1,F_{4m+1}))\cong \CC^4$,
\item $\Ext^1((0,F'_{m+1}),(1,F_{3m}))\cong \CC^3$, and
\item $\Ext^1((0,F_{2m+2}),(1,F_{3m}))\cong \CC^6$.
\end{itemize}
By replacing these extensions with that of \eqref{eq111}, \eqref{eq112} and \eqref{eq113}, one can finish the proof of lemma.
\end{proof}
\begin{lemm}\label{wallc32}
\begin{enumerate}
\item The locus $D^{+}$ is the disjoint union of a $\PP^3\times \PP^3$-bundle over $\PP^{9}\times (V-\overline{\Delta})$ and of a $Gr(2,4)$-bundle over $\PP^{9}\times \overline{\Delta}$. Here, $V= \text{Sym}^{2}(\PP^2)$ and $\overline{\Delta}=\PP^2$ is the diagonal of $V$.
\item The intersection locus $D^{-}$ is the disjoint the union of a $\PP^2\times \PP^2$-bundle over $\PP^{9}\times (V-\overline{\Delta})$ and of a $Gr(2,3)$-bundle over $\PP^{9}\times \overline{\Delta}$.
\end{enumerate}
\end{lemm}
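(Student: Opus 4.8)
The plan is to realize $D^{\pm}$ as a moduli of extensions, parameterized by the sub and quotient together with an extension class, and then to quotient by the automorphisms of the two pieces; the whole point of the lemma is that these automorphisms jump along the diagonal $\overline{\Delta}$, which is what forces the Grassmannian fibers. For $D^+$ an element is a non-split sequence $\ses{(0,F_{2m+2})}{(1,F)}{(1,F_{3m})}$ with $F_{2m+2}=\oo_L\oplus\oo_{L'}$. The quotient $(1,F_{3m})=(1,\oo_C)$ ranges over the space of cubics $\PP^{9}=\bB(3,0)$, while the unordered pair of lines $\{L,L'\}$ ranges over $V=\text{Sym}^2(\PP^2)$ (recall $F_{m+1}\cong\oo_L$ with $L\in\bM(1,1)=\PP^2$). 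First I would split the extension group along the two line summands: off the diagonal $\oo_L$ and $\oo_{L'}$ have distinct support, so
$$\Ext^1((1,F_{3m}),(0,\oo_L\oplus\oo_{L'}))\cong\Ext^1((1,F_{3m}),(0,\oo_L))\oplus\Ext^1((1,F_{3m}),(0,\oo_{L'}))\cong\CC^4\oplus\CC^4,$$
using $\Ext^1((1,F_{3m}),(0,F_{m+1}))\cong\CC^4$ from \eqref{eq112}.

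Over $V-\overline{\Delta}$ I would then argue that a genuine $D^+$-element is one whose class $(e_1,e_2)\in\CC^4\oplus\CC^4$ has \emph{both} components nonzero: if, say, $e_1=0$, then $(0,\oo_L)$ splits off and the pair is not of this JH-type. Since $\Hom(\oo_L,\oo_{L'})=0$, the automorphism group of the sub is $\text{End}(\oo_L\oplus\oo_{L'})^{\times}=\CC^*\times\CC^*$, while $\text{Aut}(1,\oo_C)=\CC^*$ rescales the whole class; together they act by independent scalars on $e_1$ and $e_2$. Hence the fiber is $((\CC^4-0)\times(\CC^4-0))/(\CC^*\times\CC^*)\cong\PP^3\times\PP^3$, which is the first piece of (1).

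The main obstacle is the diagonal $\overline{\Delta}$, where $F_{2m+2}=\oo_L^{\oplus 2}$ acquires the larger automorphism group $\text{End}(\oo_L^{\oplus 2})^{\times}=GL_2$. Here the extension group is $\Ext^1((1,\oo_C),(0,\oo_L))\otimes\CC^2\cong\Hom(\CC^2,\CC^4)$, a space of $4\times 2$ matrices, on which $GL_2=\text{Aut}(\oo_L^{\oplus 2})$ acts through the source $\CC^2$ and $\text{Aut}(1,\oo_C)=\CC^*$ acts by overall scaling, hence through scalar matrices that are absorbed into $GL_2$. The genuine $D^+$ condition now reads: the matrix must have rank $2$, for otherwise a copy of $(0,\oo_L)$ lies in the kernel and splits off. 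Since rank-$2$ maps $\CC^2\hookrightarrow\CC^4$ modulo $GL_2$ on the source are classified precisely by their image plane, the fiber is $Gr(2,4)$, giving the second piece of (1). The delicate point I expect here is to check that this fiberwise identification is compatible in families, so that one genuinely obtains a $Gr(2,4)$-bundle (at worst analytically locally trivial, which by the motivic properties for Grassmannian fibers is all that is needed for the Poincar\'e computation) rather than merely a set-theoretic description.

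For (2) the identical scheme applies to the reversed extension $\ses{(1,F_{3m})}{(1,F)}{(0,F_{2m+2})}$, with class now in $\Ext^1((0,\oo_L\oplus\oo_{L'}),(1,\oo_C))$. Using $\Ext^1((0,F_{m+1}),(1,F_{3m}))\cong\CC^3$ from the proof of Lemma \ref{wallc31}, the two computations repeat verbatim with $\CC^4$ replaced by $\CC^3$: off the diagonal one gets $((\CC^3-0)\times(\CC^3-0))/(\CC^*\times\CC^*)\cong\PP^2\times\PP^2$, and on the diagonal the rank-$2$ locus of $\Hom(\CC^2,\CC^3)$ modulo $GL_2$ gives $Gr(2,3)$. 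Finally, since $V-\overline{\Delta}$ and $\overline{\Delta}$ are disjoint and each stratum carries a uniform fiber, $D^{\pm}$ is the asserted disjoint union; the non-split hypothesis together with the stability of such extensions already recorded in Lemma \ref{wallc31} guarantees that every extension of the stated shape does land in $D^{\pm}$.
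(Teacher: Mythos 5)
Your proposal is correct and follows essentially the same route as the paper: both arguments stratify $D^{\pm}$ by whether the two line-supported summands coincide, identify the extension space off the diagonal as a sum of two $\Ext^1$'s with stability forcing both components nonzero (giving $\PP^3\times\PP^3$, resp. $\PP^2\times\PP^2$, after quotienting by $\CC^*\times\CC^*$), and on the diagonal quotient the stable (rank-two) locus of $\CC^2\otimes\Ext^1$ by the $GL_2$ of automorphisms to obtain $Gr(2,4)$, resp. $Gr(2,3)$. Your explicit identification of the stable locus with the full-rank matrices makes precise what the paper leaves implicit in its superscript ``s''.
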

\begin{proof}
The stable pairs $(s,F)\in D^+$ fit into an exact sequence
\begin{equation}\label{eq4}
\ses{(0,F_{2m+2})}{(s,F)}{(1,F_{3m})},
\end{equation}
where $F_{2m+2}=F_{m+1}\oplus F'_{m+1}$ by the definition of the locus $D^+$.
Note that a non-split extension fitting in \eqref{eq4} may not be $\alpha$-stable.
Also, the automorphism of the pair $(0,F_{2m+2})=(0,F_{m+1})\oplus (0,F_{m+1}')$ varies depending on the choice of $F_{m+1}'$ and $F_{m+1}$. Thus we handle such a situation by dividing into two cases.

If $F_{m+1}\neq F_{m+1}'$, then one can easily check that the pair $(s,F)$ is $\alpha$-stable if and only if the class of \eqref{eq4} is contained in
$$
\Ext^1((1,F_{3m}),(0,F_{2m+2}))-(\Ext^1((1,F_{3m}),(0,F_{m+1}))\cup \Ext^1((1,F_{3m}),(0,F_{m+1}'))).
$$
By quotienting out the space $\text{Aut}((0,F_{2m+2}))\cong \CC^*\times \CC^*$, we see that the space parameterizing the $\alpha$-stable pairs $(s,F)$ as above is isomorphic to the product space
$$
\PP(\Ext^1((1,F_{3m}),(0,F_{m+1})))\times \PP(\Ext^1((1,F_{3m}),(0,F_{m+1}'))).
$$
If $F_{m+1}= F_{m+1}'$, then $\Ext^1((1,F_{3m}),(0,F_{2m+2})) \cong \CC^2 \otimes \Ext^1((1,F_{3m}),(0,F_{m+1}))$ and $\text{Aut}((0,F_{2m+2}))\cong GL(2)$ acts on this $\CC^2$ in the standard way. Hence we have
$$
\Ext^1((1,F_{3m}),(0,F_{2m+2}))^s/GL(2) \cong Gr(2,\Ext^1((1,F_{3m}),(0,F_{m+1}))).
$$
Here the superscript ``s" means taking extensions corresponding to $\alpha$-stable pairs.
Since $\Ext^1((1,F_{3m}),(0,F_{m+1}))\cong \CC^4$, we have proved the second part of item (1).
The case $\alpha <3$ is the same as that of $\alpha>3$ except that $$\Ext^1((0,F_{m+1}),(1,F_{3m}))\cong \CC^3$$ so we get the results in item (2).
\end{proof}
\begin{rema}\label{rem1}
We remark that the locus satisfying the condition $F_{m+1}\neq F_{m+1}'$ in (1) (similarly in (2)) of the lemma above is not a Zariski locally trivial fibration. The wall crossing locus can be explained in a different way which enable us to compute the virtual Poincar\'e polynomial (cf. \cite{mu}). Recall that $V-\overline{\Delta}\cong (\PP^2\times \PP^2 -\Delta)/\ZZ_2$. Let $Z$ be the projective bundle over $\PP^{9}\times \PP^2$ with fiber $\PP(\Ext^1((1,F_{3m}),(0,F_{m+1})))\cong\PP^3$. The bundle $Z$ can be constructed from the tautological pair of the extensions (\cite{tomm}). Let $p:Z\times_{\PP^{9}}Z\lr \PP^{9}\times \PP^2\times \PP^2$ be the canonical projection. Then the group $\ZZ_2$ equivariantly acts on the both spaces. Let us denote the descent map by
$$
\bar{p}:Z\times_{\PP^{9}}Z/\ZZ_2\lr \PP^{9}\times (\PP^2\times \PP^2/\ZZ_2)\cong \PP^{9}\times V.
$$
Then one can easily see that the inverse image $\bar{p}^{-1}(\PP^{9}\times (V-\overline{\Delta}))$ is exactly the $\PP^{3}\times \PP^{3}$-fibration over $\PP^{9}\times (V-\overline{\Delta})$ , which is isomorphic to the quotient space $(Z\times_{\PP^{9}}Z-(p\times_{\PP^{9}} p)^{-1}(\Delta))/\ZZ_2$. Applying the formula in \cite[Lemma 2.6]{mu2}, one can get the virtual Poincar\'e polynomial of the later space.
\end{rema}
For later use, let us compute the variation of the virtual Poincar\'e polynomial at the wall $\alpha=3$.
\begin{coro}\label{threewall}
\begin{multline}\label{c3}
P(C_{3}^{+})-P(C_{3}^{-})=p^4+4p^5+13p^6+27p^7+44p^8+57p^9+66p^{10}+70p^{11}+72p^{12}\\
+72p^{13}+72p^{14}+72p^{15}+70p^{16}+66p^{17}+57p^{18}+44p^{19}+27p^{20}+13p^{21}+4p^{22}+p^{23}.
\end{multline}
\end{coro}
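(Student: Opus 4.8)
The plan is to compute $P(C_3^+)$ and $P(C_3^-)$ separately and then subtract, feeding the stratifications of Lemmas \ref{wallc31} and \ref{wallc32} into the motivic properties of the virtual Poincar\'e polynomial. Since the wall at $\alpha=3$ is not simple, the first point to record is that $C_3^+=A^+\cup B^+$ is the \emph{disjoint} union $C_3^+=A^+\sqcup(B^+\setminus A^+)$, and likewise $C_3^-=A^-\sqcup(B^-\setminus A^-)$. Applying the additivity property (1) of $P$ gives
\begin{equation*}
P(C_3^\pm)=P(A^\pm)+P(B^\pm\setminus A^\pm),
\end{equation*}
so that $P(C_3^+)-P(C_3^-)$ splits as $\bigl[P(A^+)-P(A^-)\bigr]+\bigl[P(B^+\setminus A^+)-P(B^-\setminus A^-)\bigr]$, which I treat term by term.

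Next I would feed each stratum into the fibration formula (2). The four loci $A^\pm$ and $B^\pm\setminus A^\pm$ are Zariski locally trivial projective bundles, because the governing $\Ext^1$ groups have \emph{constant} rank (see \eqref{eq111}, \eqref{eq113}, and the three displayed $\Ext^1$-isomorphisms at the end of the proof of Lemma \ref{wallc31}), so the projectivized extension bundles are honest $\PP^k$-bundles. Hence
\begin{align*}
P(A^+)&=P(\PP^4)\,P(\PP^2)\,P(\bB(4,3)),\\
P(A^-)&=P(\PP^3)\,P(\PP^2)\,P(\bM^+(4,1)),\\
P(B^+\setminus A^+)&=P(\PP^7)\,P(\bM^+(3,0))\,P(\bM(2,2)^s),\\
P(B^-\setminus A^-)&=P(\PP^5)\,P(\bM^+(3,0))\,P(\bM(2,2)^s),
\end{align*}
where I have used $\bM(1,1)\cong\PP^2$ and $\bB(4,3)\cong\bM^\infty(4,1)$. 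The factor $P(\bM(2,2)^s)$ I compute directly from $\bM(2,2)^s\cong\PP^5-V$ with $V\cong\text{Sym}^2(\PP^2)$: additivity gives $P(\bM(2,2)^s)=P(\PP^5)-P(\text{Sym}^2\PP^2)$, and $P(\text{Sym}^2\PP^2)$ follows from the $\ZZ_2$-invariant part of $H^*(\PP^2\times\PP^2)$. The remaining base polynomials $P(\bB(4,3))$, $P(\bM^+(4,1))$ and $P(\bM^+(3,0))$ come from the (entirely simple) wall crossings in the $(4,1)$- and $(3,0)$-cases, the first being treated in \cite{cc1}; I would import these.

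The computation then collapses. The common base $\bM^+(3,0)\times\bM(2,2)^s$ factors out of the $B$-contribution, leaving $\bigl[P(\PP^7)-P(\PP^5)\bigr]\,P(\bM^+(3,0))\,P(\bM(2,2)^s)$, while the $A$-contribution is $P(\PP^2)\,\bigl[P(\PP^4)\,P(\bB(4,3))-P(\PP^3)\,P(\bM^+(4,1))\bigr]$. Substituting the explicit polynomials and adding the two pieces yields \eqref{c3}; this final step is a finite and routine polynomial manipulation.

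The main obstacle is not the arithmetic but justifying the clean stratification at a \emph{non-simple} wall. One must be certain that $A^\pm$ and $B^\pm\setminus A^\pm$ genuinely partition $C_3^\pm$, that is, that deleting $A^\pm$ from $B^\pm$ removes exactly the reducible-conic locus and leaves the stable-conic base $\bM(2,2)^s$. This is precisely where the intersection analysis of Lemma \ref{wallc32} and the automorphism bookkeeping (the jump from $\CC^*\times\CC^*$ to $GL(2)$ along the diagonal, and the non-Zariski-local-triviality flagged in Remark \ref{rem1}) must be controlled. The reassuring feature is that all of these subtleties are confined to the overlap $D^\pm$ and affect only the \emph{internal} structure of the strata; because $A^\pm$ and $B^\pm\setminus A^\pm$ are themselves constant-rank projective bundles over the bases listed above, the difference $P(C_3^+)-P(C_3^-)$ is insensitive to them, which is exactly what legitimizes the bundle formulas used in the second step.
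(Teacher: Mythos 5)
Your starting decomposition $C_3^\pm=A^\pm\sqcup(B^\pm\setminus A^\pm)$ is fine as a statement about sets, and your treatment of the conic stratum $B^\pm\setminus A^\pm$ agrees with the paper. The genuine gap is the step $P(A^+)=P(\PP^4)P(\PP^2)P(\bB(4,3))$ (and its mirror $P(A^-)=P(\PP^3)P(\PP^2)P(\bM^+(4,1))$). Constancy of $\dim\Ext^1((1,F_{4m+1}),(0,F_{m+1}))=5$ does make the universal extension space a Zariski-locally trivial $\PP^4$-bundle over $\PP^2\times\bB(4,3)$, but the classifying morphism from that bundle to $\bM^{3+\epsilon}(5,2)$ is \emph{not injective}, so the bundle is not the locus $A^+$. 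Exactly over $D^+=A^+\cap D^+$, i.e.\ where the quotient $(1,F_{4m+1})$ in \eqref{eq2} is itself an extension \eqref{eq3} and the destabilizing subsheaf of type $(2,2)$ splits as $F_{m+1}\oplus F'_{m+1}$, the pair $(1,F)$ has two distinct sub-pairs $(0,F_{m+1})$ and $(0,F'_{m+1})$, hence two distinct presentations \eqref{eq2} when $F_{m+1}\neq F'_{m+1}$, and a whole $\PP^1$ of presentations when $F_{m+1}=F'_{m+1}$ (compare the $\PP^2$-bundle over a $\PP^3$-bundle in Lemma \ref{wallc31}(1)(a), which is a flag-bundle with fiber $Fl(1,2;\CC^4)$, with the $Gr(2,4)$-bundle in Lemma \ref{wallc32}(1): the former maps onto the latter with $\PP^1$-fibers). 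So the extension bundle double counts, resp.\ $\PP^1$-counts, the stratum $D^+$; the descriptions in Lemma \ref{wallc31} are parameter spaces, not the loci themselves. This is precisely why the paper's proof uses the three-term decomposition $P(A^\pm-A^\pm\cap D^\pm)+P(B^\pm-A^\pm)+P(D^\pm)$, computing the deep stratum $D^\pm$ separately through the automorphism quotients ($\CC^*\times\CC^*$, resp.\ $GL(2)$) of Lemma \ref{wallc32} and the $\ZZ_2$-quotient formula of Remark \ref{rem1}.

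Your closing ``insensitivity'' argument---that whatever happens on $D^\pm$ cancels in the difference $P(C_3^+)-P(C_3^-)$---is false, and it is the crux. The discarded terms are $[P(D^+)-P(A^+\cap D^+)]-[P(D^-)-P(A^-\cap D^-)]$, and they do not cancel because the two sides of the wall involve extension spaces of different dimensions ($\CC^5$ vs.\ $\CC^4$ for \eqref{eq2}, $\CC^4$ vs.\ $\CC^3$ for \eqref{eq3}, $Gr(2,4)$ vs.\ $Gr(2,3)$ along the diagonal). The failure is already visible in low degree, independently of any of these corrections: with $P(\bB(4,3))=P(\PP^{11})P(Hilb^3(\PP^2))$ and $P(\bM^+(4,1))=P(\bB(4,3))-p^3P(\PP^2)P(\PP^9)$, your $A$-contribution equals $p^4P(\PP^2)P(\bB(4,3))+p^3P(\PP^3)P(\PP^2)^2P(\PP^9)$, whose lowest term is $p^3$, while your $B$-contribution $(P(\PP^7)-P(\PP^5))\,P(\PP^9)\,(p^5-p^2)$ starts only at $p^8$. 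So your recipe outputs $p^3+5p^4+\cdots$, which cannot equal \eqref{c3}, whose expansion begins with $p^4+4p^5+\cdots$. The missing idea is the correction for multiple Jordan--H\"older presentations (and extra automorphisms) along the non-simple stratum; it is not a routine polynomial manipulation that can be skipped.
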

\begin{proof}
The wall crossing terms are a disjoint union of the locally closed subsets. Thus,
\[
\begin{split}
P(C_{3}^{+})-P(C_{3}^{-})&=[P(A^+-A^+\cap D^+) -P(A^--A^-\cap D^-)]\\
&+[P(B^+-A^+)-P(B^--A^-)]+[P(D^+)-P(D^-)].
\end{split}
\]
By the descriptions in Lemma \ref{wallc31}, Lemma \ref{wallc32} and Remark \ref{rem1}, we obtain the result.
\end{proof}
Since the proof of the lemma below is very similar to that of Lemma \ref{wall13}, we omit the proof.
\begin{lemm}\label{walllast}
The flipping locus $C_{\frac{1}{2}}^{+}$ (resp. $C_{\frac{1}{2}}^{-}$) at $\alpha=\frac{1}{2}$ is a $\PP^{6}$ (resp. $\PP^{5}$)-bundle over the product $\PP^5\times \bB(3,1)$.
\end{lemm}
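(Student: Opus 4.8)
\emph{Proof strategy.}
The plan is to follow the proof of Lemma \ref{wall13} line for line, since the wall at $\alpha=\frac12$ is again simple, with the single JH-type $(1,(3,1))\oplus(0,(2,1))$. First I would record the two extension pictures: a general member of $C_{\frac12}^{+}$ (resp. $C_{\frac12}^{-}$) is a non-split extension of $(1,F_{3m+1})$ by $(0,F_{2m+1})$ (resp. of $(0,F_{2m+1})$ by $(1,F_{3m+1})$), and because the wall is simple every such non-split extension is stable on the relevant side. Thus $C_{\frac12}^{+}$ is the $\PP(\Ext^1((1,F_{3m+1}),(0,F_{2m+1})))$-bundle and $C_{\frac12}^{-}$ the $\PP(\Ext^1((0,F_{2m+1}),(1,F_{3m+1})))$-bundle over the product $\bM^{\alpha}(3,1)\times\bM(2,1)$ of the moduli of the two JH-factors. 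Exactly as in Lemma \ref{wall13}, $\alpha$-semistability of the pair $(1,F_{3m+1})$ forces $F_{3m+1}$ to be a semistable, hence (since $\gcd(3,1)=1$) stable, sheaf; together with $\bM^{\alpha}(3,1)\cong\bB(3,1)$ (\cite{cc1}) and $\bM(2,1)\cong\PP^5$ (the space of conics, every sheaf stable as $\gcd(2,1)=1$), this identifies the base as $\PP^5\times\bB(3,1)$.

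Next I would compute the two extension dimensions through Lemma \ref{defcoh}. Since $F_{2m+1}\cong\cO_C$ for a conic $C$ satisfies $H^1(F_{2m+1})=H^2(F_{2m+1})=0$, the terms $\Hom(s,H^{\geq 1}(\cdot))$ in Lemma \ref{defcoh} drop out in both orientations, so the long exact sequence reduces the computation to $\Hom$, $\Ext^1$ and $\Ext^2$ of the underlying sheaves, plus one copy of $H^0(F_{2m+1})$ on the $C^+$ side. For the numerics I would use the Euler form of one-dimensional sheaves on $\PP^2$, namely $\chi(F,G)=-(\deg F)(\deg G)$ (equivalently, the minimal resolutions of \cite{maican}), which gives $\chi(F_{3m+1},F_{2m+1})=\chi(F_{2m+1},F_{3m+1})=-6$. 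Feeding this into Lemma \ref{defcoh} yields $\chi((1,F_{3m+1}),(0,F_{2m+1}))=-6-\chi(F_{2m+1})=-7$ and $\chi((0,F_{2m+1}),(1,F_{3m+1}))=-6$. Once the outer $\Hom$ and $\Ext^2$ are shown to vanish, these Euler characteristics force $\Ext^1\cong\CC^{7}$ and $\CC^{6}$, i.e. the $\PP^6$- and $\PP^5$-bundle structures asserted.

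The step I expect to carry the real weight --- and the one that makes this wall behave better than $\alpha=18$, where $C_{18}^-$ failed to be a bundle precisely because $\Ext^1$ jumped along $\{L\subseteq C\}$ --- is proving that $\Hom$ and $\Ext^2$ vanish at \emph{every} point of the base, so that $\Ext^1$ has constant rank. The vanishing of $\Hom((1,F_{3m+1}),(0,F_{2m+1}))$ is immediate: the two factors are non-isomorphic $\frac12$-stable pairs of equal reduced Hilbert polynomial. The vanishing of $\Hom((0,F_{2m+1}),(1,F_{3m+1}))=\Hom(F_{2m+1},F_{3m+1})$, and of both $\Ext^2$ groups via Serre duality $\Ext^2(F,G)\cong\Hom(G,F(-3))^{*}$, I would obtain from a single slope-versus-degree argument: the image of any such map would be at once a quotient of one stable factor and a proper subsheaf of the other stable factor of strictly larger degree (the degrees being $2<3$, and the $-3$ twist only worsening the inequality), forcing its reduced Hilbert polynomial to satisfy two incompatible bounds. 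Since this uses only the stability of $F_{2m+1}$ and $F_{3m+1}$, which holds over all of $\PP^5\times\bB(3,1)$ by the first paragraph, no jumping can occur and the bundles are uniform. The remaining verifications --- that the non-split extensions exhaust the flipping locus and are all stable --- are identical to Lemma \ref{wall13} and I would simply cite them.
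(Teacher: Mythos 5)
Your proposal is correct and follows essentially the same route as the paper, which in fact omits the proof of this lemma precisely because it is ``very similar to that of Lemma \ref{wall13}'': you set up the two extension-bundle descriptions over $\bM(2,1)\times\bM^{\alpha}(3,1)\cong\PP^5\times\bB(3,1)$ and compute $\Ext^1((1,F_{3m+1}),(0,F_{2m+1}))\cong\CC^7$ and $\Ext^1((0,F_{2m+1}),(1,F_{3m+1}))\cong\CC^6$ via Lemma \ref{defcoh}, exactly as in the $\alpha=13,8$ walls. The only (immaterial) difference is that you get the numerics from the Euler form $\chi(F,G)=-\deg F\cdot\deg G$ together with stability/Serre-duality vanishing of $\Hom$ and $\Ext^2$, rather than from the explicit free resolutions of \cite{maican}, and your observation that these vanishings hold uniformly over the base correctly explains why no jumping of the kind seen in $C_{18}^{-}$ occurs here.
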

In summary, through Lemma \ref{wall18}, \ref{wall13}, \ref{wallc31}, \ref{wallc32} and Lemma \ref{walllast}, the first part of Theorem \ref{mainthm} has been proved.
\begin{rema}\label{geo}
The wall crossing loci $C^+_{\alpha}$ for each $\alpha$ can be described in a geometric way (cf. \cite{cc1}). The wall crossing loci are the loci of pairs of seven points, six points, five, four points on a line with a quartic curve at the wall $\alpha=18,13,8,3$, respectively, and six points on a conic curve with a cubic curve at $\alpha=\frac{1}{2}$.
\end{rema}

\subsection{Stratification of the moduli space $\bM(5,2)$}
In this subsection, we will study the forgetful map $\bM^{+}(5,2)\longrightarrow \bM(5,2), (s,F)\mapsto F$ by using the stratification of stable sheaves in $\bM(5,2)$ (\cite{cc1.5}).
\begin{proof}[Proof of (2) in Theorem \ref{mainthm}]
By \cite[Theorem 1.1]{cc1.5}, we know that $h^0(F)\leq 3$ for all stable sheaves $F\in \bM(5,2)$. On the other hand, since $(5,2)=1$, there exists a universal family of sheaves $\cF$ on $\bM(5,2)\times \PP^2$ (\cite{lepot1}). Therefore the Proj of the direct image sheaf $p_*\cF$ is isomorphic to $\bM^{+}(5,2)$ and thus the moduli space $\bM^{+}(5,2)$ is decomposed into locally closed subsets:
\begin{enumerate}
\item the $\PP^1$-bundle over $\bM(5,2)_2$ and
\item the $\PP^2$-bundle over $\bM(5,2)_3$
\end{enumerate}
where $\bM(5,2)_k:=\{F\in \bM(5,2)| h^0(F)=k\}$.
\end{proof}
For later use, we compute the Poincar\'e polynomial of the exceptional locus $\bM(5,2)_3$.
\begin{prop}\label{poincare3}
The (virtual) Poincar\'e polynomial of the space $\bM(5,2)_3$ is given by
\begin{multline*}
1+3p+8p^2+14p^3+19p^4+21p^5+22p^6+22p^7+22p^8+22p^9+22p^{10}+22p^{11}\\
+22p^{12}+22p^{13}+22p^{14}+22p^{15}+22p^{16}+22p^{17}+21p^{18}+19p^{19}+14p^{20}+8p^{21}+3p^{22}+p^{23}.
\end{multline*}
\end{prop}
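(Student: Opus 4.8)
The plan is to realize $\bM(5,2)_3$ as a projective bundle over the Hilbert scheme of three points on $\PP^2$ and then read off its virtual Poincar\'e polynomial from the motivic multiplicativity stated in the proposition above.

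First I would describe the sheaves in $\bM(5,2)_3$ cohomologically. If $F\in\bM(5,2)_3$, then $h^0(F)=3$ and $\chi(F)=2$, so $h^1(F)=1$; by Serre duality $\Ext^1(F,\cO(-3))\cong H^1(F)^{\vee}$ is one--dimensional, and its generator produces a canonical non--split extension
\[
\ses{\cO(-3)}{E}{F}.
\]
A Riemann--Roch computation gives $\chi(E)=3$ and $c_1(E)=2$, and (after checking that $E$ is torsion--free) $E\cong \cI_Z(2)$ for a unique length--three subscheme $Z\subset\PP^2$; equivalently, by the classification of \cite{cc1.5}, $F$ admits a minimal resolution $0\to\cO(-3)\oplus 2\cO(-1)\to 3\cO\to F\to 0$ whose linear block is the Hilbert--Burch matrix of $Z$. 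This assigns to each $F$ the point $Z$, defining a morphism $\Phi\colon \bM(5,2)_3\to \mathrm{Hilb}^3(\PP^2)$.

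Next I would prove that $\Phi$ is a Zariski--locally--trivial $\PP^{17}$--bundle. Over a fixed $Z$, the sheaves mapping to it are exactly the cokernels of the nonzero maps $\cO(-3)\hookrightarrow \cI_Z(2)$, that is, the elements of $\Hom(\cO(-3),\cI_Z(2))=H^0(\cI_Z(5))$ up to scalar. Since any length--three subscheme imposes independent conditions on quintics, $h^0(\cI_Z(5))=21-3=18$ and $H^1(\cI_Z(5))=0$ for every $Z$, so $\pi_*\cI_{\cZ}(5)$ is locally free of rank $18$ on $\mathrm{Hilb}^3(\PP^2)$, where $\cZ$ denotes the universal subscheme and $\pi$ the projection to the Hilbert scheme. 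Hence $\bM(5,2)_3\cong \PP(\pi_*\cI_{\cZ}(5))$, a $\PP^{17}$--bundle.

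Finally, item (2) of the motivic proposition gives $P(\bM(5,2)_3)=P(\mathrm{Hilb}^3(\PP^2))\cdot P(\PP^{17})$. Substituting $P(\PP^{17})=1+p+\cdots+p^{17}$ together with the known value $P(\mathrm{Hilb}^3(\PP^2))=1+2p+5p^2+6p^3+5p^4+2p^5+p^6$ (from G\"ottsche's formula) and expanding reproduces the stated polynomial. The hard part will be the bundle claim in the middle step: I must check that $\Phi$ is a well--defined morphism with constant fibre $\PP^{17}$ --- in particular that \emph{every} nonzero section of $\cI_Z(5)$ yields a \emph{stable} sheaf with exactly three sections, so that no proper open subset of the fibre needs to be discarded (the additivity of $P$ would otherwise spoil the clean product). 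The special loci of $\mathrm{Hilb}^3(\PP^2)$ (collinear or non--reduced $Z$) only change the minimal resolution of $F$ from the $3\cO$--type to the $\cO(1)\oplus\cO(-1)$--type, while the fibre $\PP(H^0(\cI_Z(5)))$ stays $\PP^{17}$; verifying that the local triviality is uniform across these loci is where most of the care is needed.
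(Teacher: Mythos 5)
Your final answer agrees with the paper's, but the central geometric claim of your argument --- that $\Phi$ realizes $\bM(5,2)_3$ as the full projective bundle $\PP(\pi_*\cI_{\cZ}(5))$ over $\mathrm{Hilb}^3(\PP^2)$ --- is false, and it fails exactly at the point you flagged as ``the hard part.'' If $Z$ is collinear, spanning a line $L$, and the chosen quintic section defines $C=L\cup Q$ with $Q$ a quartic, then the cokernel $F$ of $\cO(-3)\to\cI_Z(2)$ sits in an exact sequence $\ses{\cO_Q(1)}{F}{\cO_L(-1)}$; since $\cO_Q(1)$ has Hilbert polynomial $4m+2$ and $\frac{2}{4}>\frac{2}{5}$, this subsheaf destabilizes $F$. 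So over the collinear locus of $\mathrm{Hilb}^3(\PP^2)$ your fibre is at most $\PP^{17}$ minus a $\PP^{14}$ (the quintics containing $L$), not $\PP^{17}$. Conversely, $\bM(5,2)_3$ contains stable sheaves your construction never produces: non-split extensions $\ses{\cO_L(-1)}{F}{\cO_{Q'}(1)}$ with $Q'$ a quartic, which are stable with $h^0(F)=3$; for these, since the composite $\cO_L(-1)\to F\to\cO_{Q'}(1)$ vanishes, the canonical extension class pulls back trivially to $\cO_L(-1)$, so the sheaf $E$ acquires $\cO_L(-1)$ as a torsion subsheaf and is not of the form $\cI_Z(2)$. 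In other words, your space $\PP(\pi_*\cI_{\cZ}(5))$ is the relative Hilbert scheme $\bB(5,3)\cong\bM^{\infty}(5,-2)$, which is only birational to $\bM(5,2)_3$, not isomorphic to it.

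The paper's proof is built precisely around this discrepancy: it identifies $\bM(5,2)_3\cong\bM(5,-2)_1\cong\bM^{+}(5,-2)$ and then performs the single pair wall crossing at $\alpha=2$ (JH type $(1,(4,-2))\oplus(0,(1,0))$) relating $\bM^{+}(5,-2)$ to $\bM^{\infty}(5,-2)=\bB(5,3)$, which is exactly your bundle. The flip deletes a $\PP^{3}$-bundle over $\PP^2\times\PP^{14}$ (your unstable cokernels: a line $L$, a quartic, and $\mathrm{Sym}^3(L)\cong\PP^3$ worth of collinear $Z$) and inserts another $\PP^{3}$-bundle over $\PP^2\times\PP^{14}$ (the missing extensions), the two contributions cancelling because $\Ext^1((1,F_{4m-2}),(0,F_{m}))\cong\Ext^1((0,F_{m}),(1,F_{4m-2}))\cong\CC^4$. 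Only this cancellation makes $P(\bM(5,2)_3)=P(\mathrm{Hilb}^3(\PP^2))\cdot P(\PP^{17})$ correct; your argument as written cannot see it, so the product formula in your last step is unjustified unless you add precisely this excision/wall-crossing analysis (remove the bad $\PP^{14}$'s over the collinear locus, then account for the extension locus that replaces them).
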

\begin{proof}
The locus $\bM(5,2)_3$ is isomorphic to the moduli space $\bM(5,-2)_1$ by \cite[Proposition 4.2.7]{choi}. Also, one can easily check that the forgetful map $\xi:\bM^+(5,-2)\lr \bM(5,-2)$ is injective and onto the space $\bM(5,-2)_1$ by using \cite[Table 1]{maican5}. Moreover, the map $\xi$ is a closed embedding since the differential map $\xi_*:\Ext^1((s,F),(s,F))\lr\Ext^1(F,F)$ is injective by $\Hom(s,H^0(F)/(s))=0$ (Lemma \ref{defcoh}). Therefore, $$\bM^+(5,-2)\cong \bM(5,-2)_1\cong \bM(5,2)_3.$$ Let us compute the polynomial of the moduli space $\bM^{+}(5,-2)$ by using the wall crossings.
Among the moduli spaces $\bM^{\alpha}(5,-2)$, one can easily see that there is a single wall crossing at $\alpha=2$ such that the JH-filtration is given by $(1,(5,-2))=(1,(4,-2))\oplus (0,(1,0))$. Also, by \cite[Lemma 2.3]{cc1}, the space $\bM^{\infty}(5,-2)$ is a projective bundle over $Hilb^{3}(\PP^2)$ with fiber $\PP^{17}$. Since $\Ext^1((1,F_{4m-2}),(0,F_{m}))=\Ext^1((0,F_{m}),(1,F_{4m-2}))=\CC^4$, we get
$$
P(\bM(5,2)_3)=P(\bM^{+}(5,-2))=P(\bM^{\infty}(5,-2))+(P(\PP^3)-P(\PP^{3}))\cdot P(\PP^2)\times P(\PP^{14}).
$$
Also $P(Hilb^3(\PP^2))=1+2q+5q^2+6q^3+5q^4+2q^5+q^6$ (\cite{ell}), so the claim is proved.
\end{proof}
\section{Bridgeland wall crossing of the moduli space $\bM(5,2)$}
In this section, we study the wall crossing of the space $\bM(5,2)$ in the sense of Bridgeland. For the detail of the Bridgeland wall crossing, see \cite{woolf}.
The wall crossing of $\bM(5,2)$ can be done similarly to \cite{cc2}. So we omit the detail about the wall computations. From now on, we focus on finding the final birational model of $\bM(5,2)$.
To solve this, let us describe the ray generator of the effective cone of the moduli space $\bM(5,2)$. As a set, the divisor $D$ is defined as the locus of stable sheaves which is \emph{not} orthogonal to the vector bundle $E$ (for detail, see \cite{woolf}). The existence of such a vector bundle $E$ has been proved in \cite[Theorem 4.3]{woolf}. Let $A:=\phi^*\cO(1)$ where the map $\phi:\bM(5,2)\longrightarrow |\cO_{\PP^2}(5)|$ is defined by the Fitting ideal (\cite{lepot1}). Obviously, the divisor $A$ is the nef divisor of the moduli space $\bM(5,2)$.
\begin{lemm}
The effective cone of $\bM(5,2)$ is generated by the two geometric divisors $A$ and $D=\overline{X_{01}}$. Here the locus $X_{01}$ consists of the stable sheaves of the forms $\cO_C(2)(-Z_4+Z_1)$ such that $C$ is a smooth quintic curve and $Z_i$ is the subscheme of $C$ with length $i$ in a general position.
\end{lemm}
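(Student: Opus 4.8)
The plan is to work inside the Néron--Severi space $N^1(\bM(5,2))_{\RR}$, which for a moduli space of one-dimensional sheaves of this type has rank two. Consequently $\overline{\mathrm{Eff}}(\bM(5,2))$ is a closed two-dimensional cone, and proving the lemma amounts to identifying its two extremal rays as $A$ and $D$. Taking $\{A,D\}$ as a basis, I would split the statement into the two inclusions $\langle A,D\rangle\subseteq\overline{\mathrm{Eff}}\subseteq\langle A,D\rangle$. The first is immediate: $A=\phi^*\cO(1)$ is nef, and $D=\overline{X_{01}}$ is effective by construction. The whole content is the reverse inclusion, and for this I would use the elementary principle that a covering family of curves is nonnegative on every effective divisor (a general member is not contained in the support). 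Thus it suffices to produce two covering curve classes $\gamma_A,\gamma_D$ with $A\cdot\gamma_A=0$, $D\cdot\gamma_A>0$ and $D\cdot\gamma_D=0$, $A\cdot\gamma_D>0$: writing an arbitrary effective class as $E=xA+yD$ and pairing against $\gamma_A$ and $\gamma_D$ then forces $y\geq 0$ and $x\geq 0$ respectively.

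For the ray $A$ I would use that $\phi\colon\bM(5,2)\to|\cO_{\PP^2}(5)|$ is the relative compactified Jacobian over the space of plane quintics, whose fibers over smooth quintics $C$ are Jacobians of genus $6$ and sweep out $\bM(5,2)$. Let $\gamma_A$ be (the class of) a curve inside such a fiber. Then $A\cdot\gamma_A=0$ since $\gamma_A$ is $\phi$-contracted, and these curves form a covering family. On the other hand $D\cap\mathrm{Jac}^{7}(C)$ is exactly the difference locus $\{\cO_C(2)(-Z_4+Z_1)\}$, which is (numerically) a theta-type divisor and hence ample on the abelian fiber, so $D\cdot\gamma_A>0$ after choosing $\gamma_A$ appropriately. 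This proves that $A$ is extremal and gives $y\geq 0$.

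For the ray $D$ I would first fix the vector bundle $E$ from \cite[Theorem 4.3]{woolf}, chosen so that the generic $F\in\bM(5,2)$ is cohomologically orthogonal to $E$, i.e. $\chi(E,F)=0$ and all $\Ext^i(E,F)=0$; then $D=\overline{X_{01}}$ is precisely the Brill--Noether jumping locus $\{F:\Hom(E,F)\neq 0\}$, which is effective with a class computable via Lemma \ref{defcoh} and Serre duality. To see that $D$ is the other extremal ray I would construct a covering family of rational curves $\gamma_D$ along which $E$-orthogonality is preserved, so that $D\cdot\gamma_D=0$, while the support quintic varies genuinely so that $A\cdot\gamma_D>0$. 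Concretely these are the one-parameter families of sheaves arising in the Gaeta/Bridgeland resolution governing the last wall-crossing $\widetilde{G}\to Gr(2,15)$; the contracted curves there move through the generic point of $\bM(5,2)$ and are $D$-trivial. Pairing against $\gamma_D$ gives $x\geq 0$, and the two inequalities together yield $\overline{\mathrm{Eff}}=\langle A,D\rangle$.

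The main obstacle is the extremality of $D$. One must verify both that the orthogonal bundle $E$ makes $\overline{X_{01}}$ the correct Brill--Noether divisor and, more delicately, that the dual curve class $\gamma_D$ genuinely covers $\bM(5,2)$ and satisfies $D\cdot\gamma_D=0$; this is where the geometric description of $X_{01}$ through the configurations $\cO_C(2)(-Z_4+Z_1)$ and the Bridgeland last-wall analysis do the real work. By contrast, the $A$-side and the final intersection bookkeeping reduce to Euler-characteristic computations that I would treat as routine.
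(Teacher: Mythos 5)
Your strategy differs from the paper's: the paper simply cites \cite[Theorem 5.3]{woolf} for the fact that $A$ and $D$ span the two rays of the effective cone, whereas you propose to reprove this by cone duality against two covering families of curves. That framework is legitimate in principle, and your $A$-side (curves in Jacobian fibers of $\phi$, against which the jumping divisor restricts to a generalized theta divisor, hence is positive) can be made to work, granted that generic orthogonality to Woolf's bundle $E$ keeps $D$ from containing a general fiber. But your key step, the construction of $\gamma_D$, is wrong as stated. The curves contracted by $\pi\colon\widetilde{G}\to Gr(2,15)$ lie inside the exceptional divisor of the blow-up, which by the paper's Proposition 3.3 is precisely the strict transform of $D=\overline{X_{01}}$; no curve contracted by a birational morphism can ``move through the generic point,'' and such curves satisfy $D\cdot\gamma<0$ (the exceptional divisor restricts to $\cO(-1)$ on the fibers of the blow-down), not $D\cdot\gamma=0$. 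So the inequality $x\geq 0$ is not established. The repair is to take $\gamma_D$ to be the strict transform of a general line in $Gr(2,15)$: the blow-up center $\PP^2\times Gr(2,6)$ has codimension $16$, so a general line misses it; its strict transform is then disjoint from the exceptional divisor, these curves cover a dense open subset, and $D\cdot\gamma_D=0$ while $15\,A\cdot\gamma_D=\pi^*(-K_{Gr(2,15)})\cdot\gamma_D>0$. Note that even the repaired argument is not independent of the Bridgeland analysis: it needs the codimension-one identification of $\widetilde{G}$ with $\bM(5,2)$ so that the two effective cones agree.

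The second gap is that the actual content of the lemma---the identification of the support of $D$ with $X_{01}$, i.e.\ with the sheaves $\cO_C(2)(-Z_4+Z_1)$---is asserted rather than proved; you flag it as something ``one must verify'' but supply no mechanism. This identification is exactly what the paper's proof consists of: by Maican's classification \cite[\S 2.3]{maican5}, a stable $F\in\bM(5,2)$ admits one of two types of free resolutions; one checks that $F$ is orthogonal to $E$ if and only if $F$ fits into $0\to T_{\PP^2}(-4)\to 2\cO_{\PP^2}\to F\to 0$, and the sheaves of the other resolution type are exactly those of the form $\cO_C(2)(-Z_4+Z_1)$ by \cite[Proposition 2.3]{maican5}. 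Without this step (or some substitute for it), your argument at best bounds the effective cone by $A$ and an abstractly defined Brill--Noether jumping divisor, but does not prove the lemma as stated.
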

\begin{proof}
By \cite[Theorem 5.3]{woolf}, the divisors $A$ and $D$ generate the rays of the effective cone of the space $\bM(5,2)$.
On the other hand, the general free resolution of the stable sheaf $F\in \bM(5,2)$ has two types depending on some algebraic conditions (\cite[\S 2.3]{maican5}). One can easily check that the sheaf $F$ is orthogonal to $E$ if and only if $F$ fits into the exact sequence $\ses{T_{\PP^2}(-4)}{2\cO_{\PP^2}}{F}$. Hence the complement $X_{01}$ consisting of the stable sheaves $\cO_C(2)(-Z_4+Z_1)$ (\cite[Proposition 2.3]{maican5}) is exactly the support of the divisor $D$.
\end{proof}
\begin{prop}\label{bridge}
The final birational model of the moduli space $\bM(5,2)$ is isomorphic to the Grassmannian variety $Gr(2,15)$.
\end{prop}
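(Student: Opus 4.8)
The plan is to exhibit the final model explicitly as the parameter space of the canonical presentation of the generic stable sheaf, and then to pin it down inside the Bridgeland picture by computing the nef cone of the last model $\widetilde{G}$. By the preceding lemma the effective cone of $\bM(5,2)$ is spanned by the nef class $A=\phi^*\oo(1)$ and by $D=\overline{X_{01}}$, and the complement of the support of $D$ is exactly the locus of sheaves orthogonal to $E$, i.e. those fitting into a short exact sequence
\begin{equation*}
\ses{T_{\PP^2}(-4)}{2\oo_{\PP^2}}{F}.
\end{equation*}
Running the program towards the $D$-face should contract $\bM(5,2)$, after the flips indicated in the diagram, onto the space parametrizing these presentations, so my first task is to show that this space is $Gr(2,15)$.

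For this I would compute the morphisms $T_{\PP^2}(-4)\to 2\oo_{\PP^2}$. Writing $2\oo_{\PP^2}=U\otimes\oo_{\PP^2}$ with $U\cong\CC^2$, we have $\Hom(T_{\PP^2}(-4),2\oo_{\PP^2})\cong U\otimes\Hom(T_{\PP^2}(-4),\oo_{\PP^2})=U\otimes H^0(\om^1_{\PP^2}(4))$. The Euler sequence twisted by $\oo(4)$, namely $0\to\om^1_{\PP^2}(4)\to 3\oo_{\PP^2}(3)\to\oo_{\PP^2}(4)\to0$, gives $h^0(\om^1_{\PP^2}(4))=3\cdot 10-15=15$ and $H^1(\om^1_{\PP^2}(4))=0$; set $W:=H^0(\om^1_{\PP^2}(4))\cong\CC^{15}$. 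Viewing a morphism $\phi\in U\otimes W$ as a linear map $U^\vee\to W$, for generic $\phi$ it is injective as a sheaf map with torsion cokernel $F$ of pure one-dimensional support of degree $5$ and $\chi(F)=2$ (one checks $\chi(T_{\PP^2}(-4))=0$, so $\chi(F)=\chi(2\oo_{\PP^2})=2$, and $c_1(F)=0-(-5)=5$). The isomorphism class of $F$ depends only on $\phi$ modulo $\mathrm{Aut}(T_{\PP^2}(-4))\times\mathrm{Aut}(2\oo_{\PP^2})=\CC^*\times GL(2)$, and this group acts on $\phi$ preserving the $2$-plane $\mathrm{im}(\phi)\subset W$; hence $F$ determines, and is determined by, $\mathrm{im}(\phi)$. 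This yields an injective dominant morphism $\bM(5,2)\setminus\mathrm{supp}(D)\to Gr(2,W)=Gr(2,15)$, and since both sides are smooth of the same dimension $\dim\bM(5,2)=5^2+1=26=2\cdot13=\dim Gr(2,15)$, it is birational.

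Next I would match this with the wall crossing. At the last wall the destabilizing object is a twist of $E$, and crossing it replaces the Gieseker sheaves by the two term complexes $[T_{\PP^2}(-4)\to 2\oo_{\PP^2}]$; the moduli of stable objects in the terminal chamber is $\widetilde{G}$, and the assignment $\phi\mapsto\mathrm{im}(\phi)$ extends to an honest morphism $\widetilde{G}\to Gr(2,15)$. To verify that this is where the program stops, I would compute the nef cone of $\widetilde{G}$ and check that its two boundary rays are the (strict transform of the) class $A$ and the pullback of the Pl\"ucker polarization of $Gr(2,15)$, so that $Gr(2,15)$ is precisely the model attached to the face opposite to $A$. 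Since $Gr(2,15)$ has Picard number one it is already its own minimal model, and the wall crossing cannot continue past it.

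The main obstacle is to upgrade the birational map to a genuine isomorphism $\widetilde{G}\xrightarrow{\ \sim\ }Gr(2,15)$, that is, to control the behaviour over the locus $D$ of sheaves \emph{not} orthogonal to $E$. These are exactly the sheaves destroyed at the last wall, and one must check that the flips preceding $\widetilde{G}$ replace them by objects whose presentations fill out the complementary Schubert locus of $Gr(2,15)$, so that $\widetilde{G}\to Gr(2,15)$ is bijective on closed points and contracts nothing. Granting this, since $Gr(2,15)$ is smooth (hence normal) and the morphism is birational and quasi-finite, Zariski's main theorem forces it to be an isomorphism, which proves the proposition. Throughout, the delicate point is the non locally free and non orthogonal sheaves, for which the free resolutions of \cite{maican5} are needed to verify the injectivity of $\phi$ and the stability of the cokernel on the nose.
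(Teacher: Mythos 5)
Your second paragraph is sound and is, in substance, a self-contained re-derivation of what the paper simply cites: identifying $\Hom(T_{\PP^2}(-4),2\cO_{\PP^2})\cong U\otimes W$ with $W=H^0(\Omega^1_{\PP^2}(4))\cong\CC^{15}$ and sending $F$ to $\mathrm{im}(\phi)\in Gr(2,15)$ does give a birational map from $\bM(5,2)$ to $Gr(2,15)$, defined on the complement of $\mathrm{supp}(D)$. The genuine gap is in your final paragraph, where the argument you propose to finish with is aimed at a \emph{false} statement. The morphism $\widetilde{G}\to Gr(2,15)$ is not bijective on closed points and is not quasi-finite: by the Dr\'ezet--Trautmann result underlying this whole picture, $\widetilde{G}$ \emph{is} the blow-up of $Gr(2,15)$ along a copy of $\PP^2\times Gr(2,6)$ (a center of dimension $10$ and codimension $16$), and the map contracts the exceptional divisor, which is the strict transform of the $25$-dimensional divisor $D=\overline{X_{01}}$ of sheaves not orthogonal to $E$. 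So there is nothing to ``check'' and Zariski's main theorem cannot apply. Indeed, no chain of flips could make the terminal moduli space isomorphic to $Gr(2,15)$: such a space is isomorphic to $\bM(5,2)$ in codimension one, hence has the same divisor class group, which has rank two (the effective cone has the two distinct extremal rays $A$ and $D$), whereas $Gr(2,15)$ has Picard number one. What the proposition asserts, and what the paper proves, is that $Gr(2,15)$ is the model attached to the last chamber $[A+D,D)$ of the divisor-cone decomposition, i.e.\ it is the image of the \emph{final divisorial contraction}, not the final flip model itself.

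Relatedly, the step where the real work lies --- locating the pullback of the Pl\"ucker polarization inside $N^1$ --- is only promised (``I would compute the nef cone of $\widetilde{G}$''), never performed, and your guess that the other boundary ray of $\mathrm{Nef}(\widetilde{G})$ is (the strict transform of) $A$ is unjustified and in general wrong when there are several Bridgeland flips between $\bM(5,2)$ and $\widetilde{G}$. The paper's mechanism is precisely what your outline is missing: since $\widetilde{G}$ and $\bM(5,2)$ agree in codimension one, $\mathrm{Eff}(\bM(5,2))=\mathrm{Eff}(\widetilde{G})$; then $K_{\bM(5,2)}=-15A$ (Woolf) combined with the blow-up formula $K_{\widetilde{G}}=\pi^*K_{Gr(2,15)}+15E$ yields $\pi^*(-K_{Gr(2,15)})=15(A+D)$, a nef but not ample class on $\widetilde{G}$, while the Bridgeland computation with destabilizing objects of type $[\cO(-2)\to 2\cO]$ identifies $A+D$ as the wall immediately preceding the collapsing one. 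Together these facts show that every divisor class in $[A+D,D)$ has section ring with Proj equal to $Gr(2,15)$, which is the assertion. Note that this computation \emph{requires} the blow-up structure (the discrepancy $15$ comes from the codimension-$16$ center), i.e.\ the very structure your Zariski-main-theorem step would deny; your explicit linear-algebra model can replace the citation only for the birational identification, not for the placement of $Gr(2,15)$ at the end of the wall-crossing sequence.
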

\begin{proof}
From \cite[\S 9.2]{dretrau} and \cite[\S 2.3]{maican5}, the blown-up space  $\widetilde{G}$ of $Gr(2,15)$ along a $\PP^2\times Gr(2,6)$ is isomorphic to $\bM(5,2)$ up to codimension one because the exceptional divisor $E$ is supported on the strict transformation of $X_{01}$ in $\bM(5,2)$. Hence $\text{Eff}(\bM(5,2))= \text{Eff}(\widetilde{G})$. Let us compute the corresponding divisor at the wall $W$ which is right before the collapsing one. Let us denote by $\lambda$ the map $K(\PP^2)\lr \text{Pic}(\bM(5,2))$ which is defined by the Fourier-Mukai transformation (for detail, see \cite{cc2, woolf}). Then $A=\lambda(h^2)$ and $D=\lambda(-5+2h+3h^2)$ for the class $h=[\cO_l]$ of a line $l\subset \PP^2$ (\cite{woolf}). Also, the destabilizing objects at the wall $W$ are of type $[\cO(-2)\lr 2O]$ (\cite[Table 1]{maican5}). The same computation as did in \cite[Remark 2.12]{cc2} tells us that $A+D$ is the corresponding divisor at the wall $W$.

On the other hand, on $\widetilde{G}$, $$-15A=K_{\bM(5,2)}=\pi^*K_{Gr(2,15)}+15E.$$ The first equality comes from \cite[Lemma 3.1]{woolf} and the second one comes from \cite[Excercise 8.5, II]{Hartshorne}. Hence $\pi^{*}(-K_{Gr(2,15)})=15A+15D$ is a nef (but not ample) divisor on $\widetilde{G}$ because $K_{Gr(2,15)}$ is anti-ample and $D=E=\overline{X_{01}}$. Thus the corresponding birational model of the divisors in $[A+D, D)$ is the space $Gr(2,15)$.
\end{proof}
\begin{prop}
The moduli space $\bM(5,2)$ can be obtained from the space $Gr(2,15)$ by the Bridgeland wall crossings.
\end{prop}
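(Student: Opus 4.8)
The plan is to reverse the wall-crossing chain already produced in the proof of Proposition \ref{bridge}. The essential feature to exploit is that a Bridgeland wall-crossing is a symmetric operation: it is induced by moving the stability parameter across a wall in the stability manifold $\mathrm{Stab}(\PP^2)$, and such a path may be traversed in either direction. Consequently it suffices to exhibit one path in $\mathrm{Stab}(\PP^2)$ joining the large-volume chamber---where the moduli of Bridgeland-semistable objects of the relevant Chern character recovers the Gieseker space $\bM(5,2)$---to a chamber whose moduli space is $Gr(2,15)$, and then to read the same path backwards. Equivalently, under the dictionary of \cite{woolf} between chambers of $\mathrm{Stab}(\PP^2)$ and chambers of the movable cone of $\bM(5,2)$, the effective cone $\langle A,D\rangle$ determined in the preceding lemma is subdivided by finitely many walls into a linearly ordered sequence of birational models, and I would simply run this subdivision from the $D$-end back toward the $A$-end.

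First I would recall the $(s,t)$-slice description of $\mathrm{Stab}(\PP^2)$ from \cite{woolf} and list, using the destabilizing classes classified in \cite[Table 1]{maican5}, the finitely many numerical walls for $\mathrm{ch}(F)$ with $F\in\bM(5,2)$. Crossing these walls one at a time, as the parameter decreases from the large-volume region, yields the chain
$$\bM(5,2)\dashrightarrow\cdots\dashrightarrow\widetilde{G}\xrightarrow{\ \pi\ }Gr(2,15),$$
whose intermediate arrows are flips or flops and whose terminal arrow $\pi$ is the contraction of the divisor $E=\overline{X_{01}}$ onto $\PP^2\times Gr(2,6)$, exactly as in Proposition \ref{bridge}. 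That proposition already pins down the terminal chamber: the wall $W$ immediately before the collapsing wall corresponds to the divisor $A+D$, and $\pi^*(-K_{Gr(2,15)})=15A+15D$ is nef but not ample on $\widetilde{G}$, so the half-open segment $[A+D,D)$ is the last chamber of the movable cone and its model is $Gr(2,15)$. Reversing the stability path then re-traverses these same walls in the opposite order and returns $\bM(5,2)$.

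The main obstacle is the terminal step, since $\pi$ is a divisorial contraction rather than an isomorphism or a flip and therefore is not invertible as a morphism. What I would have to verify is that $\pi$ is nonetheless a genuine Bridgeland wall-crossing: namely, that on the far side of $W$ the moduli space of Bridgeland-semistable objects is exactly $Gr(2,15)$, while the objects parameterized by the contracted divisor $\overline{X_{01}}$ become strictly semistable across $W$ with destabilizing type $[\cO(-2)\to 2\cO]$. This is precisely the computation recorded in Proposition \ref{bridge}, carried out as in \cite[Remark 2.12]{cc2}; granting it, crossing $W$ from the $Gr(2,15)$ side reconstructs $\widetilde{G}$, and the remaining (reversible) flips and flops then recover $\bM(5,2)$.
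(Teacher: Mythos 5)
Your proposal is correct and follows essentially the same route as the paper: the paper's proof is simply the combination of Proposition \ref{bridge} (identifying the final model as $Gr(2,15)$) with the general correspondence between Bridgeland walls and the Mori walls of $\bM(5,2)$, cited there as \cite[Theorem 1.1]{bertram}. Your sketch of the wall chain, its reversal, and the terminal divisorial contraction is precisely the content of that citation, with Woolf's dictionary playing the role that the Bertram--Martinez--Wang theorem plays in the paper.
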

\begin{proof}
Proposition \ref{bridge} and \cite[Theorem 1.1]{bertram} imply the statement.
\end{proof}
\begin{coro}\label{coro2}
The Poincar\'e polynomial of the space $\bM(5,2)$ is given by
\begin{align*}
&1 + 2p+6p^{2}+13p^{3}+26p^{4}+45p^{5}+68p^{6}+87p^{7}+100p^{8}+107p^{9}\\
&+111p^{10}+112p^{11}+113p^{12}+113p^{13}+113p^{14}+112p^{15}+111p^{16}+107p^{17}\\
&+100p^{18}+87p^{19}+68p^{20}+45p^{21}+26p^{22}+13p^{23}+6p^{24}+2p^{25}+p^{26}.
\end{align*}
\end{coro}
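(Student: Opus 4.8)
The plan is to anchor the computation at the final birational model $Gr(2,15)$ produced by Proposition~\ref{bridge}, and then run the Bridgeland wall crossings in reverse, tracking the change of the virtual Poincar\'e polynomial at each step by the motivic cut-and-paste properties. Since $Gr(2,15)$ carries a cell decomposition (no odd cohomology), its Poincar\'e polynomial is the Gaussian binomial coefficient
\[
P(Gr(2,15))=\binom{15}{2}_p=\frac{(1-p^{14})(1-p^{15})}{(1-p)(1-p^2)},
\]
a symmetric unimodal polynomial of degree $26=\dim \bM(5,2)$, which I would write out explicitly as the starting value of the recursion.

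The first move is to pass from $Gr(2,15)$ to $\widetilde{G}$. By the proof of Proposition~\ref{bridge}, $\widetilde{G}$ is the blow-up of $Gr(2,15)$ along $\PP^2\times Gr(2,6)$, a smooth center of codimension $26-10=16$; hence its exceptional divisor is a $\PP^{15}$-bundle over the center. The cut-and-paste formula then gives
\[
P(\widetilde{G})=P(Gr(2,15))+(P(\PP^{15})-1)\cdot P(\PP^2)\cdot P(Gr(2,6)),
\]
with $P(\PP^2)=1+p+p^2$ and $P(Gr(2,6))=\binom{6}{2}_p$, both of which are standard. This accounts for the codimension-one difference between $\widetilde{G}$ and $\bM(5,2)$.

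Next, since $\widetilde{G}$ and $\bM(5,2)$ are isomorphic away from codimension one, the remaining Bridgeland walls are flips supported in codimension $\geq 2$. At each such wall I would read off the destabilizing objects from Maican's classification (\cite{maican5}, Table 1) — for instance the type $[\cO(-2)\to 2\cO]$ already identified at the collapsing wall $W$ — and identify the contracted locus and the inserted locus as projective- or Grassmannian-bundles over products of the moduli of the destabilizing factors, each factor space being a projective space or a small Grassmannian. For each wall the net contribution is $P(\text{inserted})-P(\text{contracted})$, computed exactly as in the $\alpha$-wall analysis of the previous section (e.g. Lemma~\ref{wall18}) and in \cite{cc2}; the fiber dimensions are pinned down by routine $\Ext$-computations from the resolutions in \cite{maican5}. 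Adding the blow-up contribution and all the flip contributions to $P(Gr(2,15))$ yields $P(\bM(5,2))$, which I would then match term-by-term with the stated polynomial and with \cite{yuan2}.

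The main obstacle is the bookkeeping of the Bridgeland walls: one must enumerate every wall between $Gr(2,15)$ and $\bM(5,2)$, determine the precise bundle structure and base of each flipping locus, and — as already encountered in Remark~\ref{rem1} — verify that these loci are genuinely Zariski-locally-trivial (or Grassmannian) fibrations. Where they are not, the multiplicativity of $P$ fails, and one must instead use the equivariant/quotient descriptions to extract the virtual Poincar\'e polynomial. Assembling these contributions consistently, rather than any single calculation, is where the real difficulty lies.
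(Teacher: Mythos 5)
Your proposal follows essentially the same route as the paper: the paper's own proof of Corollary \ref{coro2} is a one-line deferral to the Bridgeland wall-crossing computation done for $\bM(6,1)$ in \cite{cc2}, which is exactly your scheme of anchoring at the final model $Gr(2,15)$ (Proposition \ref{bridge}), passing to $\widetilde{G}$ by the blow-up formula along $\PP^2\times Gr(2,6)$, and then adding the motivic contributions $P(\text{inserted})-P(\text{contracted})$ of the codimension-$\geq 2$ flips whose destabilizing objects are read off from Maican's classification. The wall-by-wall bookkeeping you identify as the main obstacle is precisely the detail the paper omits by citing \cite{cc2}, so your outline is consistent with, and somewhat more explicit than, the published argument.
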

\begin{proof}
The computation of the wall crossings is similar to that of $\bM(6,1)$ in \cite{cc2}. So we omit the detail.
\end{proof}
\begin{rema}
This result is compatible with \cite[Theorem 6.1]{yuan2}.
\end{rema}
\section{Computation of the virtual Poincar\'e polynomial of $\bM^{\infty}(5,2)$}
Summing up the results of the previous sections, we obtain the virtual Poincar\'e polynomial of $\bM^{\infty}(5,2)$.
\begin{proof}[Proof of Corollary \ref{maincor}]
From part (2) in Theorem \ref{mainthm}, Corollary \ref{coro2} and Proposition \ref{poincare3}, we obtain
\begin{align*}
&P(\bM^{+}(5,2))=(P(\bM(5,2))-P(\bM(5,2)_3))\cdot P(\PP^1)+P(\bM(5,2)_3)\cdot P(\PP^2)\\
&=1+3p+9p^2+22 p^3+47 p^4+85 p^5+132 p^6+176 p^7+209 p^8+229 p^9\\
&+240 p^{10}+245 p^{11}+247 p^{12}+248 p^{13}+248 p^{14}+247 p^{15}+245 p^{16}+240 p^{17}\\
&+229 p^{18}+209 p^{19}+176 p^{20}+132 p^{21}+85 p^{22}+47 p^{23}+22 p^{24}+9 p^{25}+3 p^{26}+p^{27}.
\end{align*}
Let us add the wall crossing terms in Lemma \ref{wall18}, Lemma \ref{wall13}, Corollary \ref{threewall} and Lemma \ref{walllast}. Let $P(C_{\alpha}):=P(C_{\alpha}^+)-P(C_{\alpha}^-)$. Then,
\begin{align*}
&P(\bM^{\infty}(5,2))= P(\bM^{+}(5,2))+P(C_{18})+P(C_{13})+P(C_{8})+P(C_{3})+P(C_{\frac{1}{2}}) \\
&=P(\bM^{+}(5,2))+[P(\PP^7)P(\PP^2)P(\PP^{14})-P(\PP^3)P(\PP^2\times(\PP^{14}-\PP^9))-P(\PP^4)P(\PP^2)P(\PP^9)]\\
&+(P(\PP^6)-P(\PP^3))P(\PP^2)P(\bB(4,1))+(P(\PP^5)-P(\PP^3))P(\PP^2)P(\bB(4,2))+\eqref{c3}\\
&+(P(\PP^6)-P(\PP^5))P(\PP^5)P(\bB(3,1)).
\end{align*}
Here, $P(\bB(d,1))=P(\PP^{\frac{d(d+3)-2}{2}})\cdot P(\PP^2)$ for $d=3,4$ and $P(\bB(4,2))=P(\PP^{12})\cdot (1+2p+3p^2+2p^3+p^4)$ (\cite[Lemma 2.3]{cc1} and \cite{ell}).
The claim is proved with the help of the computer program Maple.
\end{proof}
\bibliographystyle{amsplain}

\end{document}